\documentclass[12pt, a4paper]{article}
\usepackage{amsfonts}
\usepackage{mathrsfs}
\usepackage{latexsym}
\usepackage{xy}
\usepackage{amsfonts,amsmath,amssymb,amsthm}
\usepackage{color}


\xyoption{all}

\newcommand{\bcen}{\begin{center}}     \newcommand{\ecen}{\end{center}}
\newcommand{\bay}{\begin{array}}      \newcommand{\eay}{\end{array}}
\newcommand{\beq}{\begin{eqnarray*}}      \newcommand{\eeq}{\end{eqnarray*}}
\def\gl{\mathrm{gl.dim}}
\def\fd{\mathrm{fin.dim}}

\def\dim{\mathrm{dim}}
\def\max{\mathrm{sup}}

\def\Hom{\mathrm{Hom}}
\def\Ext{\mathrm{Ext}}
\def\Tor{\mathrm{Tor}}

\def\mod{\mathrm{mod}}
\def\Mod{\mathrm{Mod}}
\def\id{\mathrm{id}}
\def\pd{\mathrm{pd}}
\def\op{\mathrm{op}}
\def\per{\mathrm{per}}
\def\proj{\mathrm{proj}}

\def\inj{\mathrm{inj}}

\def\RHom{\mathrm{RHom}}

\def\Coker{\mathrm{Coker}}
\def\add{\mathrm{add}}

\begin{document}

\newtheorem{theorem}{Theorem}
\newtheorem{proposition}{Proposition}
\newtheorem{lemma}{Lemma}
\newtheorem{corollary}{Corollary}
\newtheorem{remark}{Remark}
\newtheorem{example}{Example}
\newtheorem{definition}{Definition}
\newtheorem*{conjecture}{Conjecture}
\newtheorem{question}{Question}

\title{\large\bf Recollements and homological dimensions}

\author{\large Yongyun Qin}

\date{\footnotesize College of Mathematics and Information Science,
Qujing Normal University, \\ Qujing, Yunnan 655011, China. E-mail:
qinyongyun2006@126.com}

\maketitle

\begin{abstract} We investigate the behavior of the homological dimensions
under recollements of derived categories of algebras. In particular, we
establish a series of new bounds among the selfinjective dimension or
$\phi$-dimension of the algebras linked by recollements of derived module
categories.
\end{abstract}

\medskip

{\footnotesize {\bf Mathematics Subject Classification (2010)}:
16G60; 16E35; 16G20}

\medskip

{\footnotesize {\bf Keywords} : recollements;
selfinjective dimension; Igusa-Todorov function; $\phi$-dimension. }

\bigskip

\section{\large Introduction}

Recollements of triangulated categories were introduced by Beilinson Bernstein
and Deligne on perverse sheaves \cite{BBD82}, and they  were extended to algebraic setting
by Cline, Parshall and Scott \cite{CPS88}. Roughly speaking, a recollement
of triangulated categories consists of three such categories
related by sixes functors.
Throughout we focus on recollements of derived categories of algebras, where
all algebras are finite dimensional associative algebras
over an algebraically closed field.
It is known that recollements of derived categories of algebras provide a very useful framework for investigating
the homological connections among these algebras.
For example, Happel studied how the
finiteness of the finitistic dimension of algebras in a recollement interacted on each other \cite{Hap93}, and
some authors discussed the finiteness of the global dimension \cite{Wie91, Koe91, AKLY13}.
Recently, many experts turn to the study of the homological properties or invariants
in the framework of recollements (see \cite{AKLY13, CX12, CX13, Kel98, Pan13, QH16}).
In this paper, we will investigate the behavior of the homological dimensions
under recollements of derived categories of algebras.

   Global dimension and finitistic dimension are classical homological invariants of an algebra,
   and the estimation of these invariants of algebras linked in certain nice
   ways has attracted the interest of many experts. Among others,
   Happel studied the global dimension of two algebras
   involved in a tilting triple, showing that the difference
   between them is less than the projective dimension of the tilting module \cite{Hap88}.
   Later, Keller and Kato generalized this to derived equivalence algebras,
   where the difference is less than the length of the tilting complex (see \cite[Chap.12.5(b)]{GR92}
   and \cite{Kato98}), and then, Pan and Xi showed that this is also true for
   finitistic dimension \cite{PX09}. Moreover, Chen and Xi
   studied the connections between
   the global dimension (resp. finitistic dimension)
   of the algebras involved in a recollement of derived module categories.
  In this paper, we will observe the selfinjective dimension and $\phi$-dimension in recollement situation.

  Let $k$ be an algebraically closed field and $\otimes := -\otimes _k$.
  Assume $A$ is a finite dimensional associative $k$-algebra, and
  $X^\bullet $ is a bounded cochain complex of finitely generated right $A$-modules,
  then there is a unique (up to isomorphism) minimal projective complex $P^\bullet_X$ bounded from right
  such that $X^\bullet\cong P^\bullet_X$ in $D^{b}(\mod A)$. We define $\pd (X_A^\bullet) :=-{\rm min} \{i\ |\ P_X^i\neq0 \}$ and
sup$(X_A^\bullet) :=-{\rm max} \{i\ |\ P_X^i\neq0 \}$.
The {\it homological width}
of $X^\bullet$ is defined as $w(X_A^\bullet):=-\sup(X_A^\bullet)+\pd (X_A^\bullet)$.
  Using this notation, we estimate the right selfinjective dimension of algebras involved in a standard recollement
  (see \cite{Han14} for definition).

\medskip

 {\bf Theorem I.} {\it Let $A$, $B$ and $C$ be algebras, and ($\mathcal{D} B$,\ $\mathcal{D} A$,\ $\mathcal{D} C$,\ $i^*,i_*=i_!,i^!,j_!,j^!=j^*,j_*$)
 a standard recollement defined
 by $X \in D^b(C^{op} \otimes A)$ and $Y \in D^b(A^{op} \otimes B)$. Suppose $Y^*=\RHom_B(Y,B)$.
 Then the following hold true:

{\rm (a)} $\id (C_C)\leq\id (A_A)+w(_CX)$;

{\rm (b)} If $i_*B \in K^{b}(\proj A)$ then $\id (B_B)\leq\id (A_A)+w(Y_B)$;

{\rm (c)} If $i^!A \in K^{b}(\proj B)$ then
$$\id (A_A)\leq \max \{ \id (B_B) +\pd (_AY)+\pd (Y^*_A), \id (C_C)+w(X_A) \}.$$
}

We mention that Theorem 1 extends the derived equivalence case of Kato \cite{Kato98},
and the bounds in Theorem 1 also work for the left selfinjective dimension, global dimension and finitistic dimension.

Next, we turn to the $\phi$-dimension, which is a new homological dimension arising from the
Igusa-Todorov function. In \cite{IT05}, the authors introduced two functions $\phi$ and $\psi$,
sending each finitely generated module to a natural number. These Igusa-Todorov functions
determine new homological measures, generalising the notion of projective dimension, and
have become a powerful tool in the understanding of the finitistic dimension conjecture \cite{IT05, Wei09, Xu13}.
From \cite{HL13, HLM08}, the $\phi$-dimension of an algebra $A$ is
$$\phi \dim(A) :=\sup \{\phi (M) \ | \ M\in \mod A  \}.$$
The $\phi$-dimension of an algebra $A$ has a strong connection with its
global dimension and finitistic dimension: $\fd (A) \leq
\phi \dim(A)\leq \gl(A)$ and they all coincide in the case of $\gl(A)<\infty$.
Moreover, the $\phi$-dimension can be used to describe selfinjective algebras:
an algebra $A$ is selfinjective if and only if $\phi \dim(A)=0$ \cite{HL13}.
Recently, various works were dedicated to study and generalise
the properties of Igusa-Todorov function and the $\phi$-dimension \cite{FLM15, HL13, HLM08, LM16, Xu13}.
In particular, the $\phi$-dimension of an algebra $A$ was characterised in terms of the bi-functors $\Ext_A^i
(-,-)$ and $\Tor_i^A
(-,-)$ in \cite{FLM15}. Using this characterization, the authors prove that the finiteness of the $\phi$-dimension is invariant under
derived equivalence, and they also give a bound for the $\phi$-dimension of two derived equivalence algebras.
In this paper, we will extend this by observing the behavior of the $\phi$-dimensions under
recollements of derived categories of algebras.

\medskip

 {\bf Theorem II.} {\it Let $A$, $B$ and $C$ be finite dimensional algebras, and ($\mathcal{D} B$,\ $\mathcal{D} A$,\ $\mathcal{D} C$,\ $i^*,i_*=i_!,i^!,j_!,j^!=j^*,j_*$)
a standard recollement defined
 by $X \in D^b(C^{op} \otimes A)$ and $Y \in D^b(A^{op} \otimes B)$. Suppose $Y^*=\RHom_B(Y,B)$.
 Then the following hold true:

{\rm (a)} $\phi \dim(C) \leq \phi \dim(A)+w(_CX)$;

{\rm (b)} If $i_*B \in K^{b}(\proj A)$ then $\phi \dim(B) \leq \phi \dim(A)+w(Y_B)$;

{\rm (c)} $\phi \dim(A)\leq \max \{ \phi \dim(B) +\pd (_AY)+\pd (Y^*_A), \phi \dim(C)+w(X_A) \}.$
}

\medskip

The paper is structured as follows. In section 2, we will recall some necessary definitions and
conventions need for the developing of the paper.
Section 3 is about the estimation
of selfinjective dimension, in which Theorem I is obtained.
In section 4 we consider $\phi$-dimension, and we prove Theorem II.

 \section{\large Definitions and conventions}\label{Section-definitions and conventions}

\indent\indent Let $\mathcal{T}_1$, $\mathcal{T}$ and
$\mathcal{T}_2$ be triangulated categories. A {\it recollement} of
$\mathcal{T}$ relative to $\mathcal{T}_1$ and $\mathcal{T}_2$ is
given by
$$\xymatrix@!=4pc{ \mathcal{T}_1 \ar[r]^{i_*=i_!} & \mathcal{T} \ar@<-3ex>[l]_{i^*}
\ar@<+3ex>[l]_{i^!} \ar[r]^{j^!=j^*} & \mathcal{T}_2
\ar@<-3ex>[l]_{j_!} \ar@<+3ex>[l]_{j_*}}$$ and
denoted by ($\mathcal{T}_1$,$\mathcal{T}$,$\mathcal{T}_2$,$i^*,i_*=i_!,i^!,j_!,j^!=j^*,j_*$)
such that

(R1) $(i^*,i_*), (i_!,i^!), (j_!,j^!)$ and $(j^*,j_*)$ are adjoint
pairs of triangle functors;

(R2) $i_*$, $j_!$ and $j_*$ are full embeddings;

(R3) $j^!i_*=0$ (and thus also $i^!j_*=0$ and $i^*j_!=0$);

(R4) for each $X \in \mathcal {T}$, there are triangles

$$\begin{array}{l} j_!j^!X \rightarrow X  \rightarrow i_*i^*X  \rightarrow
\\ i_!i^!X \rightarrow X  \rightarrow j_*j^*X  \rightarrow
\end{array}$$ where the arrows to and from $X$ are the counits and the
units of the adjoint pairs respectively \cite{BBD82}.

In this paper, we focus on recollements of derived categories of algebras.
Let $A$ be a finite dimensional associative
algebra over an algebraically closed field $k$. Denote by $\Mod A$ the
category of right $A$-modules, and by $\mod A$, $\proj A$
and $\inj A$ the full subcategories consisting of all finitely
generated modules, finitely
generated projective modules, and finitely
generated injective modules, respectively.  We denote by $\mathcal{C}(\Mod A)$
the category of cochain complexes over $\Mod A$, with the usual chain map
as morphism.
For
$* \in \{{\rm nothing}, -, +, b \}$, denote by $\mathcal{D}^*(\Mod
A)$ the derived category of complexes over $\Mod
A$ satisfying the corresponding boundedness condition. Denote by
$K^* (\proj A)$ (resp. $K^* (\inj A)$) the homotopy category of of complexes
over $\proj A$ (resp. $\inj A$) satisfying the corresponding boundedness condition,
and $\mathcal{D}^b(\mod A)$ the bounded
derived category of complexes over $\mod A$.
Up to isomorphism, the objects in $K^{b}(\proj A)$ are
precisely all the compact objects in $\mathcal{D}(\Mod A)$. For
convenience, we do not distinguish $K^{b}(\proj A)$ from the {\it
perfect derived category} $\mathcal{D}_{\per}(A)$ of $A$, i.e., the
full triangulated subcategory of $\mathcal{D} A$ consisting of all
compact objects, which will not cause any confusion. Moreover, we
also do not distinguish $K^b(\inj A)$ (resp.
$\mathcal{D}^b(\mod A)$) from their essential images under the
canonical full embeddings into $\mathcal{D}(\Mod A)$.

Usually, we
just write $\mathcal{D} A$ (resp. $\mathcal{C} A$) instead of $ \mathcal{D}(\Mod A)$ (resp. $ \mathcal{C}(\Mod A)$).
Furthermore, for any integers $a, b$ with $a \leq b < \infty$, we denote by $\mathcal{D} (A) _{[a,b]}$
(resp. $\mathcal{C} (A) _{[a,b]}$, $K (\proj A) _{[a,b]}$ and $K (\inj A) _{[a,b]}$) the full subcategory of
$\mathcal{D} A$ (resp. $\mathcal{C} A$, $K (\proj A)$ and $K (\inj A)$) whose objects are $X^\bullet$
with $X^i = 0$ for $i \notin [a, b]$.

For any object $X$ in $\mathcal{D}^-(modA)$, there is a unique (up to isomorphism)
minimal projective complex $P^\bullet_X \in K^-(\proj A)$
such that $X^\bullet\cong P^\bullet_X$ in $\mathcal{D}^{-}(\mod A)$. Here,
a complex is called {\it minimal} if the images
of its differentials lie in the radicals.
Define $\pd (X_A^\bullet) :=-{\rm min} \{i\ |\ P_X^i\neq0 \}$ and
sup$(X_A^\bullet) :=-{\rm max} \{i\ |\ P_X^i\neq0 \}$.
The {\it homological width}
of $X^\bullet$ is defined as $w(X_A^\bullet):=-\sup(X_A^\bullet)+\pd (X_A^\bullet)$. Clearly, sup$(X_A^\bullet)<\infty$
if $X^\bullet \in \mathcal{D}^-(\mod A)$, and both
$w(X_A^\bullet)$ and $\pd (X_A^\bullet)$ are finite if $X^\bullet$ is compact
in $\mathcal{D}A$. In particular, if $X\in \mod A$ then both $w(X)$ and $\pd (X)$ here are equal to
the usual projective dimension of $X$ as a module.

The following two lemmas will be used latter.

\begin{lemma} \label{lemma1}
For any $X^\bullet \in D^-(\mod A)$, $\sup (X_A^\bullet) =-{\rm max} \{i\ |\ H^i(X^\bullet)\neq 0 \}. $
\end{lemma}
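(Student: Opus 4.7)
The plan is to reduce the statement to a claim about the minimal projective complex $P^\bullet_X$ and then use Nakayama's lemma to pin down its top degree. Since $P^\bullet_X \cong X^\bullet$ in $\mathcal{D}^-(\mod A)$, cohomology is preserved, so the claim is equivalent to showing
$$\max\{i \mid P^i_X \neq 0\} = \max\{i \mid H^i(X^\bullet) \neq 0\}.$$
Write $m$ for the left hand side and $n$ for the right hand side.

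First I would establish the easy inequality $n \leq m$: for every $i > m$ the term $P^i_X$ vanishes, hence $H^i(P^\bullet_X) = 0$, and transporting back through the isomorphism in the derived category gives $H^i(X^\bullet) = 0$ for all $i > m$.

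The main content is the reverse inequality $m \leq n$, which I would prove by showing $H^m(P^\bullet_X) \neq 0$. Because $P^{m+1}_X = 0$ by definition of $m$, the differential $d^m$ vanishes, so
$$H^m(P^\bullet_X) \;=\; P^m_X/\operatorname{im}(d^{m-1}).$$
Here is where minimality is essential: by definition $\operatorname{im}(d^{m-1}) \subseteq \operatorname{rad}(P^m_X)$. Since $P^m_X$ is a nonzero finitely generated projective $A$-module, Nakayama's lemma gives $P^m_X/\operatorname{rad}(P^m_X) \neq 0$, hence a fortiori $P^m_X/\operatorname{im}(d^{m-1}) \neq 0$. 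Thus $n \geq m$, completing the proof.

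I do not expect any serious obstacle. The only subtle point is the appeal to Nakayama's lemma together with the definition of minimality (images of differentials lie in the radicals), which is precisely what allows one to read off the degree of the highest nonzero projective term from the cohomology of the complex. Everything else is bookkeeping with the sign convention $\sup(X^\bullet_A) = -\max\{i \mid P^i_X \neq 0\}$.
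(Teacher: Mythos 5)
Your proof is correct and follows essentially the same route as the paper: both reduce to the minimal projective complex $P^\bullet_X$, note the easy vanishing of $H^i$ above the top degree, and then use minimality in the top degree to show the top cohomology is nonzero. The only cosmetic difference is that you argue directly via $\operatorname{im}(d^{m-1})\subseteq\operatorname{rad}(P^m_X)$ and Nakayama, whereas the paper argues by contradiction that the last nonzero differential would split; these are interchangeable forms of the same observation.
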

\begin{proof}
If $\sup (X_A^\bullet) =-n$, then $P_X^i=0$, $\forall \ i>n$. Thus, $H^i(X^\bullet)=H^i(P_X^\bullet)
=0$ \mbox{for any} $i>n$.
If $H^n(X^\bullet)=0=H^n(P_X^\bullet)$, then the last non-zero differential of
 $P^\bullet_X$ splits, which contradicts to the minimality of $P^\bullet_X$.
 Hence, $H^n(X^\bullet)\neq0$ and $\sup (X_A^\bullet) =-n=-{\rm max} \{i\ |\ H^i(X^\bullet)\neq 0 \}$.
\end{proof}

\begin{lemma} \label{lemma2}
Let $X^\bullet \in D^b(\mod A^{op} \otimes B)$ with $X^\bullet_B\in K^{b}(\proj B)$ and $X^*=\RHom_B(X^\bullet,B)$.
Then $\pd (_BX^*)=-\sup (X_B^\bullet)=-\sup (_AX^\bullet)$ and
$\sup (_BX^*)=-\pd(X_B^\bullet)=\sup (X_A^*)$.
\end{lemma}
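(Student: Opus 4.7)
My plan is to read off both strings of equalities from an explicit minimal projective model of $X^*$ obtained by dualising the minimal projective complex of $X_B^\bullet$. Since $X_B^\bullet \in K^b(\proj B)$, the minimal projective complex $P_X^\bullet$ exists, is bounded, and its non-zero terms sit in the degree interval $[-\pd(X_B^\bullet),\,-\sup(X_B^\bullet)]$. Because $P_X^\bullet$ is a bounded complex of finitely generated projective right $B$-modules,
\[
X^* \;=\; \RHom_B(X^\bullet, B) \;\cong\; \Hom_B(P_X^\bullet, B),
\]
and the right-hand side is a bounded complex of finitely generated projective left $B$-modules whose non-zero terms lie in degrees $[\sup(X_B^\bullet),\,\pd(X_B^\bullet)]$.

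The crucial step is to check that $\Hom_B(P_X^\bullet, B)$ is itself minimal, so that it may be identified with the minimal projective complex $P_{X^*}^\bullet$ of $X^*$ as a left $B$-module. I would argue this differential by differential: if $f : P \to Q$ is a morphism between finitely generated projective right $B$-modules with image in $\rad(Q) = Q\,\rad(B)$, then for any $\phi \in \Hom_B(Q, B)$ one has $\phi(f(P)) \subseteq \phi(Q)\,\rad(B) \subseteq \rad(B)$, so $\Hom_B(f, B)(\phi) = \phi \circ f$ sends $P$ into $\rad(B)$; and for a finitely generated projective left $B$-module of the form $\Hom_B(P, B)$, a standard idempotent-decomposition argument shows that this latter condition characterises membership in the radical. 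Granting this, inspection of supports yields
\[
\pd({}_B X^*) \,=\, -\sup(X_B^\bullet) \qquad \text{and} \qquad \sup({}_B X^*) \,=\, -\pd(X_B^\bullet).
\]

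For the remaining equalities $\sup({}_A X^\bullet) = \sup(X_B^\bullet)$ and $\sup(X_A^*) = \sup({}_B X^*)$, I invoke Lemma \ref{lemma1}, whose right-hand side $-\max\{i : H^i(\cdot) \neq 0\}$ depends only on the underlying complex of $k$-vector spaces and not on the side from which the module structure is taken. Both $X^\bullet$ and $X^*$ lie in the bounded derived category of finite-dimensional bimodules, so Lemma \ref{lemma1} applies equally on either side and the side-independence is immediate, which combined with the previous display gives all four claimed identities.

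The only genuine obstacle is the minimality-preservation step in the second paragraph; once that standard duality statement is in hand, everything else is bookkeeping with degrees of bounded complexes and reading off the two endpoints of the support interval of $P_{X^*}^\bullet$.
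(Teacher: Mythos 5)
Your proposal is correct and follows essentially the same route as the paper: dualise the minimal projective resolution of $X_B^\bullet$ via $\Hom_B(-,B)$, observe that minimality is preserved, read off the degree interval, and use Lemma~\ref{lemma1} for the side-independence of $\sup$. The only difference is that you spell out the radical-preservation argument for minimality, which the paper simply asserts as a property of the equivalence $\RHom_B(-,B):K^{b}(\proj B)\rightarrow K^{b}(\proj B^{op})$.
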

\begin{proof}
According to Lemma~\ref{lemma1}, $\sup (X_B^\bullet)=-{\rm max} \{i\ |\ H^i(X^\bullet)\neq 0 \}=\sup (_AX^\bullet)$.
Suppose $\sup (X_B^\bullet)=q$.
Since $X^\bullet_B\in K^{b}(\proj B)$, $\pd (X^\bullet_B)=m<\infty$.
Thus, $X^\bullet_B$ is quasi-isomorphic to a minimal projective complex of the form $$0 \longrightarrow P^{-m} \longrightarrow
P^{-m+1} \longrightarrow \cdots \longrightarrow P^{-q} \longrightarrow 0 .$$
Therefore, $_BX^*$ is quasi-isomorphic to a minimal projective complex $$0 \longrightarrow Q^{q} \longrightarrow
Q^{q+1} \longrightarrow \cdots \longrightarrow Q^{m} \longrightarrow 0 ,$$since
 the functor $\RHom_B(-,B):K^{b}(\proj B)\rightarrow K^{b}(\proj B^{op})$ is an
 equivalence and it sends a minimal complex to a minimal one.
 Hence, $\pd (_BX^*)=-q=-\sup (_AX^\bullet)$
 and $\sup (_BX^*)=-m=-\pd(X_B^\bullet)$. On the other hand, $\sup (X_A^*)=\sup (_BX^*)$ by
 Lemma~\ref{lemma1}.
\end{proof}

Sometimes it is convenient to work in the framework of standard recollement,
where all functors are isomorphic to either derived Hom functor or derived tensor product functor.

\begin{definition}{\rm (\cite[Definition 1]{Han14}) Let $A,B$ and $C$ be algebras.
An recollement ($\mathcal{D} B$,\ $\mathcal{D} A$,\ $\mathcal{D} C$,\ $i^*,i_*=i_!,i^!,j_!,j^!=j^*,j_*$) is
said to be {\it standard} and {\it defined by} $Y \in \mathcal
{D}^b(A^{\op} \otimes B)$ and $X \in \mathcal {D}^b(C^{\op} \otimes A)$
if $i^* \cong -\otimes^L_A Y$ and $j_! \cong -\otimes^L_CX$.
}
\end{definition}

\begin{proposition}\label{lemma-functor} {\rm (\cite[Proposition 2]{Han14})
Let $A,B$ and $C$ be algebras, and ($\mathcal{D} B$,\ $\mathcal{D} A$,\ $\mathcal{D} C$,\ $i^*,i_*=i_!,i^!,j_!,j^!=j^*,j_*$)
a standard recollement defined by $Y \in \mathcal
{D}^b(A^{\op} \otimes B)$ and $X \in \mathcal {D}^b(C^{\op} \otimes A)$. Then
$$\begin{array}{ll}
i^*\cong -\otimes^L_A Y, & j_! \cong -\otimes^L_CX, \\
i_*=\RHom _B(Y, -)=-\otimes^L_B Y^*, & j^!=\RHom _A(X, -)=-\otimes^L_A X^*, \\
i^!=\RHom _A(Y^*, -), & j_*=\RHom _C(X^*, -),\\
\end{array}$$
where $X^*=\RHom_A(X,A)$ and $Y^*=\RHom_B(Y,B)$.
}
\end{proposition}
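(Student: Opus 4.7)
The plan is to read off the six functors one pair at a time from the two given ones, using the adjunctions (R1) of the recollement together with standard derived Hom/tensor adjunction, and then upgrade $\RHom$ to a tensor product whenever the relevant bimodule is one-sidedly perfect.

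First I would settle the trivial entries. By the definition of standard recollement, $i^*\cong -\otimes^L_A Y$ and $j_!\cong -\otimes^L_C X$ are given. Next, because $(i^*,i_*)$ is an adjoint pair, the derived tensor-Hom adjunction yields
\[
\Hom_{\mathcal{D}A}(i^*M,N)=\Hom_{\mathcal{D}A}(M\otimes^L_A Y,N)\cong \Hom_{\mathcal{D}B}(M,\RHom_B(Y,N))
\]
for $M\in \mathcal{D}A$, $N\in\mathcal{D}B$, so $i_*\cong \RHom_B(Y,-)$ by uniqueness of right adjoints. The same recipe, applied to $(j_!,j^!)$ with $j_!=-\otimes^L_CX$, gives $j^!\cong \RHom_A(X,-)$.

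The second step is to convert these $\RHom$ expressions into tensor products. The key observation is that in a recollement the middle functors $i_*=i_!$ and $j^*=j^!$ are simultaneously left and right adjoints, so they preserve arbitrary coproducts. For $i_*=\RHom_B(Y,-)$ to commute with coproducts, $Y$ must be compact as a right $B$-module, i.e.\ $Y_B\in K^b(\proj B)$. Then the standard isomorphism $\RHom_B(Y,M)\cong M\otimes^L_B \RHom_B(Y,B)=M\otimes^L_B Y^*$, valid for compact $Y_B$, gives $i_*\cong -\otimes^L_B Y^*$. An identical argument applied to $j^!=\RHom_A(X,-)$ shows $X_A\in K^b(\proj A)$ and hence $j^!\cong -\otimes^L_A X^*$.

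Finally, I would use these tensor descriptions and another round of tensor-Hom adjunction to identify the remaining two functors. Since $(i_!,i^!)=(i_*,i^!)$ is adjoint and $i_*\cong -\otimes^L_B Y^*$, uniqueness of right adjoints yields $i^!\cong \RHom_A(Y^*,-)$. Symmetrically, $(j^*,j_*)=(j^!,j_*)$ adjoint and $j^!\cong -\otimes^L_A X^*$ give $j_*\cong \RHom_C(X^*,-)$. The only nontrivial point in the whole argument is the compactness claim in the second step: one must argue from the recollement axioms that the middle functors preserve coproducts, and hence that $Y_B$ and $X_A$ lie in $K^b(\proj -)$. Once that is in hand, everything else is a mechanical application of adjunction and the tensor-Hom formula, so I expect the compactness deduction to be the main (and really only) obstacle.
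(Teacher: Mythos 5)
The paper does not reprove this proposition; it is quoted verbatim from \cite[Proposition 2]{Han14}, and your argument (read off the right adjoints of $-\otimes^L_AY$ and $-\otimes^L_CX$ via tensor--Hom adjunction, use that $i_*=i_!$ and $j^!=j^*$ are also left adjoints and hence preserve coproducts to deduce $Y_B$ and $X_A$ compact, convert $\RHom$ to $-\otimes^L$ via the evaluation isomorphism, then take right adjoints once more) is exactly the standard proof given there. Your reasoning is correct; the only blemish is a typo in the displayed adjunction, which should read $\Hom_{\mathcal{D}B}(M\otimes^L_AY,N)\cong\Hom_{\mathcal{D}A}(M,\RHom_B(Y,N))$.
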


\begin{proposition}\label{lemma-standard-recollement}{\rm (\cite[Proposition 3]{Han14})
Let $A,B$ and $C$ be algebras. If $\mathcal{D} A$ admit a
recollement relative to $\mathcal{D} B$ and $\mathcal{D} C$, then
$\mathcal{D} A$ admit a standard
recollement relative to $\mathcal{D} B$ and $\mathcal{D} C$.}
\end{proposition}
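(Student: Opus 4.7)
The plan is to produce the two bimodule complexes defining the standard structure directly from the given recollement, and then show that the remaining four functors are forced to take the desired form. By the very definition of ``standard,'' it suffices to realize $i^*$ and $j_!$ as derived tensor products; Proposition~\ref{lemma-functor} then automatically identifies the other four functors. The natural candidates are
$$Y := i^*(A) \quad \text{and} \quad X := j_!(C),$$
viewed in $\mathcal{D} B$ and $\mathcal{D} A$ respectively.

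The first step is to equip $Y$ and $X$ with the required bimodule structures. The left-regular representations give algebra homomorphisms $A \to \End_{\mathcal{D} A}(A)$ and $C \to \End_{\mathcal{D} C}(C)$; composing with $i^*$ and $j_!$ produces algebra homomorphisms $A \to \End_{\mathcal{D} B}(Y)$ and $C \to \End_{\mathcal{D} A}(X)$. To upgrade these to genuine derived actions, I would pass to the canonical dg enhancement of the derived category of an algebra and invoke Keller's representability theorem, which lifts the functors $i^*$ and $j_!$ to dg bimodules; tracking the construction shows that the underlying complex of the resulting dg bimodule is exactly $Y$ (resp.\ $X$). This places $Y$ in $\mathcal{D}(A^{\op}\otimes B)$ and $X$ in $\mathcal{D}(C^{\op}\otimes A)$.

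The second step is to upgrade these to bounded, finitely generated bimodule complexes. Because $i_*$ and $j^!$ themselves admit right adjoints (namely $i^!$ and $j_*$), they preserve arbitrary coproducts; hence their left adjoints $i^*$ and $j_!$ preserve compact objects. Since $A$ and $C$ are compact in $\mathcal{D} A$ and $\mathcal{D} C$, the complexes $Y$ and $X$ are compact in $\mathcal{D} B$ and $\mathcal{D} A$, i.e.\ lie in $K^b(\proj B)$ and $K^b(\proj A)$. The total cohomology of each is therefore finite dimensional over $k$, and finite dimensionality forces finite generation over the tensor-product algebras $A^{\op}\otimes B$ and $C^{\op}\otimes A$, so $Y \in \mathcal{D}^b(A^{\op}\otimes B)$ and $X \in \mathcal{D}^b(C^{\op}\otimes A)$. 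The third step is to verify the natural isomorphisms $i^*\cong -\otimes^L_A Y$ and $j_!\cong -\otimes^L_C X$. Both sides are triangle functors commuting with arbitrary coproducts, and by construction they agree on the generator $A$ (resp.\ $C$) as bimodule-complexes. The full subcategory of $\mathcal{D} A$ (resp.\ $\mathcal{D} C$) on which a fixed comparison natural transformation is an isomorphism is triangulated and closed under coproducts; since $A$ (resp.\ $C$) compactly generates $\mathcal{D} A$ (resp.\ $\mathcal{D} C$), the two functors must be isomorphic on the whole derived category.

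The main obstacle I expect is the first step: on the level of triangulated categories, the map $A \to \End_{\mathcal{D} B}(Y)$ records only the action on cohomology, which is insufficient to define a derived-tensor representation. Producing a genuine object in $\mathcal{D}(A^{\op}\otimes B)$ requires a dg- or $A_\infty$-level enhancement, which is where Keller's representability theorem does the essential work. Once the bimodule lift is in place, the verifications of boundedness, finite generation, and the natural isomorphisms proceed by standard compact-generation and d\'evissage arguments.
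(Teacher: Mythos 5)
There is a genuine gap, and it sits exactly where you locate the ``essential work.'' Your plan is to show that the \emph{given} functors satisfy $i^*\cong-\otimes^L_AY$ and $j_!\cong-\otimes^L_CX$ with $Y=i^*(A)$, $X=j_!(C)$. But the only input you have is an abstract triangle functor between derived categories; the ``Keller representability theorem'' you invoke lifts functors that are already induced by dg-level data (dg functors, or bimodules), not arbitrary triangle functors. Whether every coproduct-preserving triangle functor -- indeed, even every triangle \emph{equivalence} -- between derived categories of algebras is isomorphic to a standard one is a well-known open problem (Rickard's question on standard equivalences), so no such lifting theorem is available in the generality you need. Concretely, your d\'evissage in the third step presupposes a comparison natural transformation $-\otimes^L_AY\to i^*$; agreement on the single object $A$, together with the algebra map $A\to\End_{\mathcal{D}B}(Y)$ (which only records an action up to homotopy), does not produce one. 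Moreover, the two sides of your construction are not on the same footing: $X=j_!(C)$ is a compact \emph{exceptional} object ($\Hom_{\mathcal{D}A}(X,X[n])=0$ for $n\neq0$, $\End\cong C$ by full faithfulness of $j_!$), so Keller's Morita theory for dg categories does let you realize \emph{some} fully faithful $-\otimes^L_CX'$ with the same image as $j_!$; but $Y=i^*(A)$ is not exceptional in general ($\Hom_{\mathcal{D}B}(i^*A,i^*A[n])\cong H^n(i_*i^*A)$ need not vanish), so even the object-level bimodule lift of $Y$ is not covered by the argument you sketch.

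The statement being proved is weaker than what you aim for, and the paper's source exploits this: the paper gives no proof but cites Han (\cite[Proposition 3]{Han14}), whose argument does \emph{not} show the given recollement is standard. Instead it constructs a possibly different recollement of $\mathcal{D}A$ relative to $\mathcal{D}B$ and $\mathcal{D}C$ which is standard, using Keller's dg Morita theory applied to the distinguished objects the recollement provides: the compact exceptional object $j_!C$ with endomorphism algebra $C$, and the exceptional, self-compact generator $i_*B$ of $\mathrm{Ker}\,j^*$ with endomorphism algebra $B$ (here $\Hom_{\mathcal{D}A}(i_*B,i_*B[n])\cong\Hom_{\mathcal{D}B}(B,B[n])$), realizing these as bimodule complexes and rebuilding the six functors from them. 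Your first two observations -- that $i^*$ and $j_!$ preserve compacts because $i_*$ and $j^!$ have right adjoints, and that compactness plus finite-dimensionality puts the lifted complexes in $\mathcal{D}^b$ of the relevant bimodule categories -- are correct and reappear in that argument; the missing idea is to abandon the attempt to standardize the original functors and instead manufacture a new standard recollement from these exceptional objects.
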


Owing to Proposition~\ref{lemma-standard-recollement}, we will restrict
our discussion on standard-recollement in the following sections.

\section{\large Recollement and selfinjective dimension}
\indent\indent In this section, we focus on
the selfinjective dimension of algebras involved in a recollement. In particular, using the homological
length and projective dimension of the bimodule complexes in the standard recollement,
we will establish a series of new bounds among the selfinjective dimension of algebras
whose derived categories are involved in a recollement.
In addition, we will show that these bounds also work for
the global dimension and finitistic dimension.

Our first theorem
is a generation of \cite[Proposition 1.7]{Kato98}, which says that
the difference between the
selfselfinjective dimensions of two derived equivalence algebras is less than the term length of
the tilting complex.
The ideal of our proof of Theorem~\ref{theorem1} is similar to that
in \cite{Kato98}, but additional ingredients are needed.

\begin{theorem} \label{theorem1}
Let $A$, $B$ and $C$ be finite dimensional $k$-algebras, and ($\mathcal{D} B$,\ $\mathcal{D} A$,\ $\mathcal{D} C$,\ $i^*,i_*=i_!,i^!,j_!,j^!=j^*,j_*$)
a standard recollement defined
 by $X \in D^b(C^{op} \otimes A)$ and $Y \in D^b(A^{op} \otimes B)$. Suppose $Y^*=\RHom_B(Y,B)$.
 Then the following hold true:

{\rm (a)} $\id (C_C)\leq\id (A_A)+w(_CX)$;

{\rm (b)} If $i_*B \in K^{b}(\proj A)$ then $\id (B_B)\leq\id (A_A)+w(Y_B)$;

{\rm (c)} If $i^!A \in K^{b}(\proj B)$ then
$$\id (A_A)\leq \max \{ \id (B_B) +\pd (_AY)+\pd (Y^*_A), \id (C_C)+w(X_A) \};$$

{\rm (a')} If $j_!DC\in K^{b}(\inj A)$ then $\id (_CC)\leq\id (_AA)+w(_CX)$;

{\rm (b')} If $i_*DB \in K^{b}(\inj A)$ then $\id (_BB)\leq\id (_AA)+w(Y_B)$;

{\rm (c')} If $i^*DA \in K^{b}(\inj B)$ then
$$\id (_AA)\leq \max \{ \id (_BB) +\pd (_AY)+w(Y^*_A), \id (_CC)+w(X_A) \}.$$

\end{theorem}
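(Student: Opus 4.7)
The plan is to handle the six parts as three pairs, using $k$-duality $D=\Hom_k(-,k)$ to reduce the primed statements (a'), (b'), (c') to the unprimed ones (a), (b), (c). For the two direct bounds (a) and (b) the strategy is uniform. Full faithfulness of $j_!$ (respectively $i_*$) combined with $j^!X\cong C$ and $i^!i_*B\cong B$ yields
\[
\RHom_C(N,C)\cong\RHom_A(j_!N,X),\qquad \RHom_B(N,B)\cong\RHom_A(i_*N,Y^*);
\]
the second identity uses the hypothesis $i_*B=Y^*\in K^{b}(\proj A)$. Next I choose a bounded complex $Q^\bullet$ of $(C,A)$-bimodules (respectively of $(B,A)$-bimodules) quasi-isomorphic to $X$ (resp.\ $Y^*$) in which each $Q^i$ is projective as a left $C$-module (resp.\ as a left $B$-module), with amplitude $w(_CX)$ (resp.\ $w(_BY^*)=w(Y_B)$ by Lemma~\ref{lemma2}); such $Q^\bullet$ is obtained by lifting the minimal one-sided projective resolution to a bimodule complex. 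Because each $Q^i$ is flat on the correct side, $j_!N\cong N\otimes_C Q^\bullet$ (resp.\ $i_*N\cong N\otimes_B Q^\bullet$) is represented by a complex whose $i$-th entry is a genuine right $A$-module. The key technical estimate is: for $L\in\mod A$ and any $P^\bullet\in K^{b}(\proj A)$ supported in degrees $[a,b]$, the complex $\RHom_A(L,P^\bullet)$ has cohomology concentrated in the interval $[a,\,b+\id(A_A)]$. This follows by induction on $b-a$, using that each $\RHom_A(L,P^i)$ sits in degrees $[0,\id(A_A)]$ since $\Ext^p_A(L,A)=0$ for $p>\id(A_A)$. Applying this estimate to each module $N\otimes Q^i$ placed in cohomological degree $i$ and combining via the stupid filtration of $Q^\bullet$, the shifts accumulate to give precisely $\id(A_A)+w(_CX)$ in (a) (and $\id(A_A)+w(Y_B)$ in (b)).

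For the reverse bound (c) I will apply $\RHom_A(M,-)$ to the recollement triangle $j_!j^!A\to A\to i_*i^*A\to$ for $M\in\mod A$ and bound each outer term. The first term $\RHom_A(M,j_!j^!A)=\RHom_A(M,X^*\otimes_C^L X)$ is bounded by $\id(C_C)+w(X_A)$ via the same filtration scheme as in (a), now using a bimodule representative of $X$ whose terms are right $A$-projective (of amplitude $w(X_A)$) together with the dual form of the key estimate (with $\id(C_C)$ in place of $\id(A_A)$, acting through the left $C$-module structure). The second term unfolds as $\RHom_A(M,i_*i^*A)=\RHom_B(M\otimes_A^L Y,Y)$ via the $(i^*,i_*)$ adjunction; filtering $M\otimes_A^L Y$ by a bimodule representative of $Y$ with left $A$-projective terms of length $\pd(_AY)$ writes it as a complex of honest right $B$-modules in degrees $[-\pd(_AY),-\sup(_AY)]$, and applying the key estimate to each module piece contributes $\id(B_B)$ plus the amplitude of a right $A$-projective resolution of $Y^*$. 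The hypothesis $i^!A\in K^{b}(\proj B)$ is invoked precisely to provide the biduality $Y\cong Y^{**}$ that converts the naive bound $-\sup(Y_B)=\pd(_BY^*)$ from Lemma~\ref{lemma2} into the tighter $\pd(Y^*_A)$ required by the statement; I expect this interconversion between the one-sided projective dimensions of $Y$ and $Y^*$ to be the main technical obstacle. Combining the two outer bounds via the triangle then yields (c).

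The primed statements are proved by the same scheme applied to the left module setting. Using $D=\Hom_k(-,k)$ to translate injective coresolutions of right modules into projective resolutions of left modules, together with the identity $\id(_RR)=\pd(DR_R)$ for a finite-dimensional algebra $R$, the hypotheses $j_!DC,\,i_*DB\in K^{b}(\inj A)$ and $i^*DA\in K^{b}(\inj B)$ become the injective-coresolution analogues of the compactness conditions in (a)--(c), and the filtration argument transports with projectives replaced by injectives throughout.
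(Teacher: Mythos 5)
Your treatment of (a), (b) and the primed parts is essentially the paper's argument: full faithfulness of $j_!$ (resp.\ $i_*$) reduces $\Ext^i_C(N,C)$ to $\Hom_{\mathcal{D}A}(j_!N,X[i])$ (resp.\ to $\Hom_{\mathcal{D}A}(i_*N,Y^*[i])$), the source is represented in a window of width $w(_CX)$ (resp.\ $w(Y_B)$, via Lemma~\ref{lemma2}), the target is a bounded complex of projectives, and your ``key estimate'' is exactly the d\'evissage the paper invokes as \cite[Lemma 1.6]{Kato98}. The duality reduction for (a')--(c') also matches the paper. One small correction: in (b) the identity $\RHom_B(N,B)\cong\RHom_A(i_*N,i_*B)$ is pure full faithfulness; the hypothesis $i_*B\in K^b(\proj A)$ is not needed for the identity but only to make the target a bounded complex of projectives so the vanishing estimate applies.

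Part (c) is where there is a genuine gap. You apply $\RHom_A(M,-)$ to the triangle $j_!j^!A\to A\to i_*i^*A$, i.e.\ you put the recollement triangle in the \emph{second} variable. The term $\RHom_A(M,i_*i^*A)\cong\RHom_B(i^*M,i^*A)$ is fine (adjunction $(i^*,i_*)$), but the term $\RHom_A(M,j_!j^!A)$ admits no adjunction: $j_!$ is a \emph{left} adjoint, so $\Hom_A(j_!U,V)\cong\Hom_C(U,j^!V)$ requires $j_!$ in the first variable, and $j_!$ has no left adjoint. Your proposed substitute --- computing $\RHom_A(M,X^*\otimes_C^L X)$ from a bimodule representative of $X$ with right-$A$-projective terms --- does not work: such a representative controls neither the derived tensor $-\otimes_C^L-$ (that needs projectivity over $C$, which is not available since $\pd(_CX)$ may be infinite in (c)) nor the appearance of $\id(C_C)$, which can only enter through a Hom computed in $\mathcal{D}C$. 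The repair is to use either the triangle $j_!j^*M\to M\to i_*i^*M$ with $\Hom_{\mathcal{D}A}(-,A[i])$, as the paper does, or equivalently the \emph{other} recollement triangle $i_!i^!A\to A\to j_*j^*A$ with $\Hom_{\mathcal{D}A}(M,-)$; in both cases the two outer terms become $\Hom_C(j^*M,j^*A[i])$ and $\Hom_B(i^*M,i^!A[i])$ by adjunction. This also explains the hypothesis, about which your proposal is confused: $i^!A\in K^b(\proj B)$ is not there to furnish a biduality $Y\cong Y^{**}$ (note $-\sup(Y_B)=\pd(_BY^*)$ is a projective dimension over $B^{\op}$ and cannot be ``converted'' into $\pd(Y^*_A)$, a projective dimension over $A$); it is there so that $i^!A=\RHom_A(Y^*,A)$ is a bounded complex of projective $B$-modules whose top nonzero degree is $\sup$-controlled, namely $-\sup(i^!A)=\pd(Y^*_A)$ by Lemma~\ref{lemma2}, which is precisely what feeds the summand $\pd(Y^*_A)$ into the vanishing estimate $\Hom_B(i^*M,i^!A[i])=0$ for $i>\id(B_B)+\pd(_AY)+\pd(Y^*_A)$.
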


\begin{proof}

(a) If $w(_CX)=\infty$, (a) holds obviously. If $w(_CX)<\infty$, then
$_cX$ is quasi-isomorphic to a minimal projective complex $P^\bullet$ of the form
$$0 \longrightarrow P^{-\pd (_CX)} \longrightarrow
P^{-\pd (_CX)+1} \longrightarrow \cdots \longrightarrow P^{-\sup (_CX)} \longrightarrow 0 .$$
Thus, for any $M\in \mod C$,
$j_!M\cong M\otimes_C^LX\cong M\otimes_CP^\bullet$ in $\mathcal{D}k$.
Therefore, $H^i(j_!M)=0$ for any $i>-\sup (_CX)$ or $i<-\pd (_CX)$.
As a consequence, $j_!M$ is isomorphic to
a complex
$$0 \longrightarrow X^{-\pd (_CX)} \longrightarrow
\cdots \longrightarrow X^{-\sup (_CX)} \longrightarrow 0 $$
in $\mathcal{D}A$.
On the other hand, $j_!C\cong X_A$ admits a minimal projective resolution $P^\bullet_X$
of the form
$$0 \longrightarrow P^{-\pd (X_A)} \longrightarrow
\cdots \longrightarrow P^{-\sup (X_A)} \longrightarrow 0. $$
Using \cite[Lemma 1.6]{Kato98} or doing induction on the length
of $P^\bullet_X$, we get $\Hom _{\mathcal{D}(\Mod A)}(j_!M,j_!C[i])=0$,
for any $i>\id A_A-\sup (X_A)+\pd (_CX)=\id A_A-\sup (_CX)+\pd (_CX)=\id A_A+w(_CX)$.
Therefore, for any $M\in \mod C$, $$\Ext _C^i(M,C)=\Hom  _{\mathcal{D}(\Mod C)}(M,C[i])
=\Hom  _{\mathcal{D}(\Mod A)}(j_!M,j_!C[i])=0,$$
for any $i>\id A_A+w(_CX)$. That is, $\id C_C\leq\id A_A+w(_CX)$.

\medskip

(b) Since $Y_B \cong i^*A \in K^{b}(\proj B)$, it follows from Lemma~\ref{lemma2}
that $\pd (_BY^*)=-\sup (Y_B)$ and
$\sup (_BY^*)=-\pd(Y_B)$. Hence,
$_BY^*$ is quasi-isomorphic to
a projective complex $Q^\bullet \in K(\proj B^{\op})_{[\sup (Y_B),\pd (Y_B) ]}$.
Thus, for any $M\in \mod B$, $i_*M\cong M\otimes_B^LY^* \in\mathcal{D}A_{[\sup (Y_B), \pd (Y_B)]}$,
up to quasi-isomorphic.
On the other hand, $i_*B=Y^*_A \in K(\proj A)_{[-\pd (Y_A^*), \pd (Y_B)]}$.
If $i_*B \in K^{b}(\proj A)$, then $\pd (Y_A^*)<\infty$ and thus by \cite[Lemma 1.6]{Kato98},
$\Hom _{\mathcal{D}(\Mod A)}(i_*M,i_*B[i])=0$,
for any $i>\id A_A+\pd (Y_B)-\sup (Y_B)=\id A_A+w(Y_B)$.
Therefore,$$\Ext _B^i(M,B)=\Hom  _{\mathcal{D}(\Mod B)}(M,B[i])
=\Hom  _{\mathcal{D}(\Mod A)}(i_*M,i_*B[i])=0,$$
for any $i>\id A_A+w(Y_B)$ and $M\in \mod B$. That is, $\id B_B\leq\id A_A+w(Y_B)$.

\medskip

(c) If one of $\pd (_AY)$, $\pd (Y^*_A)$ and $w(X_A)$ is infinite, (c) holds true.
Now assume all of $\pd (_AY)$, $\pd (Y^*_A)$ and $w(X_A)$ are finite.

For any $M\in \mod A$, there is a triangle
$$j_!j^*M \rightarrow M \rightarrow i_*i^*M \rightarrow $$ in $\mathcal{D}(\Mod A)$.
For any $i \in \mathbb{Z}$,
applying the functor $\Hom_{\mathcal{D}(\Mod A)}(-, A[i])$ to
this triangle, we obtain exact sequence
$$\Hom_{\mathcal{D}(\Mod A)}(i_*i^*M, A[i])\rightarrow \Hom_{\mathcal{D}(\Mod A)}
(M, A[i])\rightarrow \Hom_{\mathcal{D}(\Mod A)}(j_!j^*M, A[i]).$$

Now we prove (c) by two steps.

\medskip

{\it Step 1.} We claim $$\Hom_{\mathcal{D}(\Mod A)}(i_*i^*M, A[i])=0, \ \forall\ i> \id (B_B) +\pd (_AY)+\pd (Y^*_A).$$

\medskip

Indeed, $\Hom_{\mathcal{D}(\Mod A)}(i_*i^*M, A[i])\cong \Hom_{\mathcal{D}(\Mod B)}(i^*M, i^!A[i])$,
and clearly, $i^*M\cong M\otimes_A^LY\in\mathcal{D}B_{[-\pd (_AY),-\sup (_AY)]}$,
up to quasi-isomorphic.
By Proposition~\ref{lemma-functor} and
Lemma~\ref{lemma2}, we have
$\sup (i^!A)=\sup(\RHom_A(Y^*, A))=-\pd(Y_A^*)$.
Since $i^!A\in K^{b}(\proj B)$, it follows that $i^!A\in K(\proj B)_{[t, \pd(Y_A^*)]}$
for some $t\in \mathbb{N}$.
Hence, by \cite[Lemma 1.6]{Kato98},
we have $$\Hom_{\mathcal{D}(\Mod B)}(i^*M, i^!A[i])=0, \ \forall\ i> \id (B_B) +\pd (_AY)+\pd (Y^*_A).$$

\medskip

{\it Step 2.} We claim $$\Hom_{\mathcal{D}(\Mod A)}(j_!j^*M, A[i])=0, \ \forall\ i> \id (C_C)+w(X_A).$$

\medskip

Indeed, $\Hom_{\mathcal{D}(\Mod A)}(j_!j^*M, A[i])\cong \Hom_{\mathcal{D}(\Mod B)}(j^*M, j^*A[i])$.
By Lemma~\ref{lemma2}, we have $\pd (_AX^*)=-\sup (X_A^\bullet)$ and
$\sup (_AX^*)=-\pd(X_A^\bullet)$. Thus,
$_AX^*$ is quasi-isomorphic to
a projective complex $P^\bullet \in K(\proj A^{\op})_{[\sup (X_A),\pd (X_A)]}$.
Therefore, $j^*M\cong M\otimes_A^LX^*\in\mathcal{D}C_{[\sup (X_A),\pd (X_A)]}$,
up to quasi-isomorphic.
On the other hand, $\sup(j^*A)=\sup(X_C^*)=\sup (_AX^*)=-\pd(X_A^\bullet)$,
and our assumption $\pd (Y^*_A)<\infty$ implies
$j^*A\in K^{b}(\proj C)$ (see \cite[Lemma 4.3]{AKLY13}). Hence, $j^*A$ is quasi-isomorphic to
a projective complex $Q^\bullet \in K(\proj C)_{[u,\pd (X_A)]}$, for some $u\in\mathbb{N}$.
Therefore, by \cite[Lemma 1.6]{Kato98},
we have $\Hom_{\mathcal{D}(\Mod B)}(j^*M, j^*A[i])
\linebreak =0$, for any $i> \id (C_C)+\pd (X_A)-\sup (X_A)
=\id (C_C)+w(X_A).$

\medskip

Combining the above two claims and the exact sequence
$$\Hom_{\mathcal{D}(\Mod A)}(i_*i^*M, A[i])\rightarrow \Hom_{\mathcal{D}(\Mod A)}
(M, A[i])\rightarrow \Hom_{\mathcal{D}(\Mod A)}(j_!j^*M, A[i]),$$
we get $\Ext _A^i(M,A)=\Hom_{\mathcal{D}(\Mod A)}
(M, A[i])=0$, for any $i> \max \{ \id (B_B) +\pd (_AY)+\pd (Y^*_A), \id (C_C)+w(X_A) \}.$
Therefore, $$\id (A_A)\leq \max \{ \id (B_B) +\pd (_AY)+\pd (Y^*_A), \id (C_C)+w(X_A) \}.$$

\medskip

(a'), (b') and (c') can be proved similarly, just need to consider
the projective dimension of $DC$ (resp. $DB$ and $DA$) as a right $C$ (resp. $B$ and $A$)-module.
Here we only write down the proof of (a').

If $w(_CX)=\infty$, then we are done. Assume $\pd (_CX)<\infty$.
Then $_CX$ is quasi-isomorphic to
a projective complex $P^\bullet \in K(\proj C^{\op})_{[-\pd (_CX),-\sup (_CX)]}$.
Hence, for any $ M\in \mod C$, $j_!M\cong M\otimes_C^LX\in\mathcal{D}A_{[-\pd (_CX),-\sup (_CX)]}$,
up to quasi-isomorphic.
Since $j_!DC\in K^{b}(\inj A)$,
$j_!DC$ is quasi-isomorphic to
a injective complex $I^\bullet \in K(\inj A)_{[-\pd (_CX),t]}$,
for some $t\in \mathbb{N}$.
Therefore, by \cite[Lemma 1.6]{Kato98},
we have $$\Ext _C^i(DC,M)=\Hom _{\mathcal{D}(\Mod C)}(DC,M[i])=\Hom _{\mathcal{D}(\Mod A)}(j_!DC,j_!M[i])=0,$$
for any $i>\pd(DA_A)-\sup(_CX)+\pd(_CX) =\pd(DA_A)+w(_CX)$.
Hence, $\pd(DC_C)\leq\pd(DA_A)+w(_CX)$, that is, $\id _CC\leq\id _AA+w(_CX)$.

\end{proof}

As a consequence of Theorem~\ref{theorem1}, we reobtain the following result
of Kato.
\begin{corollary}{\rm (\cite[Proposition 1.7]{Kato98})}
Let $A$ and $B$ be two derived equivalence algebras and $_BP_A^\bullet$
the corresponding two-side tilting complex. Then

{\rm (1)} $\id (A_A)-w(P_A) \leq \id (B_B)\leq\id (A_A)+w(P_A) ;$

{\rm (2)} $\id (_AA)-w(P_A) \leq \id (_BB)\leq\id (_AA)+w(P_A) .$
\end{corollary}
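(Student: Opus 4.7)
The plan is to obtain the corollary by specializing Theorem~\ref{theorem1}(b) to a degenerate standard recollement that encodes the derived equivalence, and then iterating by symmetry to get both inequalities.

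First I would view the equivalence $-\otimes_B^L P\colon \mathcal{D}B\to \mathcal{D}A$ induced by the two-sided tilting complex ${}_BP_A^\bullet$ as a standard recollement of $\mathcal{D}A$ relative to $\mathcal{D}B$ and the zero triangulated category (so $C=0$ and $X=0$), with $i_*\cong -\otimes_B^L P$. By Proposition~\ref{lemma-functor} the bimodule $Y$ representing $i^*$ must satisfy $Y^*\cong P$, hence $Y\cong \RHom_A(P,A)$, an $(A,B)$-bimodule complex that is perfect on both sides because $P$ is two-sided tilting. Applying Theorem~\ref{theorem1}(b) -- whose hypothesis $i_*B=P\in K^b(\proj A)$ is part of the tilting property -- yields $\id(B_B)\leq \id(A_A)+w(Y_B)$. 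A direct Lemma~\ref{lemma2} calculation (using that $\sup$ depends only on homology together with $\pd(Y_B)=-\sup(P_A)$ and $\sup(Y_B)=-\pd(P_A)$) identifies $w(Y_B)=w(P_A)$, giving the upper half of (1).

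The reverse inequality follows by a symmetric application: encode the inverse equivalence $-\otimes_A^L Y\colon \mathcal{D}A\to \mathcal{D}B$ as a standard recollement of $\mathcal{D}B$ relative to $\mathcal{D}A$ and the zero category, and apply Theorem~\ref{theorem1}(b) once more; the required perfectness hypothesis is again the tilting property, and the same width calculation yields $\id(A_A)\leq\id(B_B)+w(P_A)$. Combining the two bounds gives (1).

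For (2) the argument is parallel using the primed variant (b') of Theorem~\ref{theorem1}, which concerns the left selfinjective dimension; the required hypotheses $i_*DB\in K^b(\inj A)$ and its symmetric counterpart hold because a two-sided tilting complex is also two-sided cotilting. The main technical obstacle throughout is the Lemma~\ref{lemma2} bookkeeping that confirms $w(P_A)=w({}_BP)=w(Y_B)=w({}_AY)$, which is precisely what makes the right-hand sides of (1) and (2) side-independent.
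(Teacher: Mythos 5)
Your argument is correct and is essentially the paper's: both reduce the corollary to Theorem~\ref{theorem1} applied to a trivial (one-sided) recollement encoding the derived equivalence, with the width identification $w({}_BP)=w(P_A)=w(Y_B)$ supplied by Lemmas~\ref{lemma1} and~\ref{lemma2}. The only, harmless, difference is that the paper places $\mathcal{D}B$ in the $\mathcal{T}_2$ position of a single recollement $(0,\mathcal{D}A,\mathcal{D}B)$ with $j_!=-\otimes_B^L P$ and reads both inequalities off parts (a) and (c) (resp.\ (a') and (c')), whereas you place $\mathcal{D}B$ in the $\mathcal{T}_1$ position and invoke part (b) (resp.\ (b')) twice, once for each direction of the equivalence.
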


\begin{proof}
Applying Theorem~\ref{theorem1} to the trivial recollement
$$\xymatrix@!=4pc{ 0 \ar[r] & \mathcal{D}A \ar@<-3ex>[l]
\ar@<+3ex>[l] \ar[r]^{-\otimes_A^LP_B^*} & \mathcal{D}B
\ar@<-3ex>[l]_{-\otimes_B^LP_A} \ar@<+3ex>[l]}$$
and noting that $w(_BP)=w(P_A)$, we obtain the desired statement.
\end{proof}

\begin{corollary}\label{corollary-triangular-matrix-injdim}
Let $B$ and $C$ be finite dimensional algebras, $M$ a finitely
generated $C$-$B$-bimodule, and $A = \left[\begin{array}{cc} B & 0
\\ M & C \end{array}\right] $. Then, the following statements hold.

{\rm (i)} $\id (B_B)\leq \id (A_A)$ and $\id (_CC)\leq \id (_AA)$.

{\rm (ii)} If $\pd (M_B)<\infty$, then $\id (A_A)\leq \sup\{\id (B_B)+\pd(_CM)+1, \id(C_C) \}$.

{\rm (iii)} If $\pd (_CM)<\infty$, then $\id (_AA)\leq \sup\{\id (_CC)+\pd(M_B)+1, \id(_BB) \}$.

\end{corollary}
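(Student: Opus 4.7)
The plan is to apply Theorem~\ref{theorem1} to the standard recollement that arises from the idempotent structure of $A$, and to derive the left-sided statements (namely $\id({_CC}) \leq \id({_AA})$ and (iii)) by symmetry from the right-sided ones, exploiting that $A^{op} \cong \left[\begin{array}{cc} C^{op} & 0 \\ M & B^{op} \end{array}\right]$ is again a triangular matrix algebra, with the same $M$ now regarded as a $B^{op}$-$C^{op}$-bimodule.

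Let $e_1, e_2 \in A$ be the orthogonal diagonal idempotents, so $e_1Ae_1 \cong B$, $e_2Ae_2 \cong C$ and $A/Ae_2A \cong B$. I would work with the standard recollement $(\mathcal{D} B, \mathcal{D} A, \mathcal{D} C)$ defined by $Y = B \in D^b(A^{op} \otimes B)$, with left $A$-action through the projection $A \to B$, and $X = e_2A \in D^b(C^{op} \otimes A)$. The five computations I would carry out are:
(1) $Y^* = \RHom_B(B,B) \cong B$ has $Y^*_A \cong e_1A$ projective, so $\pd(Y^*_A) = 0$ and in particular $i_*B \in K^{b}(\proj A)$;
(2) $X_A = e_2A$ is projective, so $w(X_A) = 0$;
(3) as a left $C$-module, ${_CX} \cong {_CC} \oplus {_CM}$, so $w({_CX}) = \pd({_CM})$;
(4) from the short exact sequence of left $A$-modules $0 \to {_AM} \to Ae_1 \to {_AB} \to 0$ (in which $_AM$ factors through $A \to C$), lifting a projective resolution of $_CM$ over $C$ to one over $A$ yields $\pd({_AY}) = \pd({_AB}) \leq \pd({_CM}) + 1$; and
(5) $i^!A = \Hom_A(Y^*_A, A_A) \cong Ae_1 \cong B_B \oplus M_B$ as a right $B$-module, so $i^!A \in K^{b}(\proj B)$ precisely when $\pd(M_B) < \infty$.

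With these in hand, the first half of (i) is immediate from Theorem~\ref{theorem1}(b), whose hypothesis is ensured by (1) and whose bound is $\id(A_A) + w(Y_B) = \id(A_A)$. Part (ii) follows from Theorem~\ref{theorem1}(c): its hypothesis $i^!A \in K^{b}(\proj B)$ is exactly $\pd(M_B) < \infty$ by (5), and feeding (1)--(4) into the bound of (c) collapses it to $\max\{\id(B_B) + \pd({_CM}) + 1,\ \id(C_C)\}$, as required.

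Finally, the statement $\id({_CC}) \leq \id({_AA})$ and part (iii) are obtained by applying, respectively, the first half of (i) and part (ii) to the triangular matrix algebra $A^{op}$, under the correspondence $B \leftrightarrow C^{op}$, $C \leftrightarrow B^{op}$, $\pd(M_B) \leftrightarrow \pd({_CM})$. The main obstacle is careful bookkeeping of the left/right bimodule structures; the single technical point on which the sharpness of (ii) hinges is the projectivity of $Y^*_A \cong e_1A$, for otherwise the $\pd(Y^*_A)$ term appearing in Theorem~\ref{theorem1}(c) would inflate the bound by a spurious $\pd(M_B) + 1$.
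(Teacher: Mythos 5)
Your proposal is correct, and for the right-module statements it coincides with the paper's argument: both set up the standard recollement $(\mathcal{D}B,\mathcal{D}A,\mathcal{D}C)$ defined by $Y=A/Ae_2A$ and $X=e_2A$, make the same computations ($Y^*_A\cong e_1A$ projective, $i^!A\cong Ae_1\cong B\oplus M$ as right $B$-modules, $\pd(_AY)\leq\pd(_CM)+1$ from $0\to{_AM}\to Ae_1\to{_AB}\to 0$, $w(X_A)=0$), and feed them into Theorem~\ref{theorem1}(b) and (c); your observation that the projectivity of $Y^*_A$ is what keeps the bound in (ii) sharp is exactly the right point. The one place you genuinely diverge is the left-module statements: the paper stays inside the $2$-recollement of \cite[Example 3.4]{AKLY13}, using its second constituent recollement $(\mathcal{D}C,\mathcal{D}A,\mathcal{D}B)$ (with $i^*\cong-\otimes_A^L Ae_2$, $\pd((A/Ae_1A)_A)=\pd(M_B)+1$) together with the primed parts (a$'$)--(c$'$) of Theorem~\ref{theorem1}, translating the hypothesis $DA\otimes_A^LAe_2\in K^b(\inj C)$ into $\pd(_CM)<\infty$; you instead pass to $A^{op}\cong\left[\begin{array}{cc} C^{op} & 0\\ M & B^{op}\end{array}\right]$ and reuse the unprimed statements. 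The two routes are equivalent (the second recollement for $A$ is the first one for $A^{op}$ up to taking opposites), but yours has the mild advantage of not needing the primed half of Theorem~\ref{theorem1} or the $2$-recollement formalism, at the cost of the bimodule-side bookkeeping you already flag.
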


\begin{proof}
Let $e_1=\left[\begin{array}{cc} 1 & 0
\\ 0& 0 \end{array}\right]$ and $e_2=\left[\begin{array}{cc} 0 & 0
\\ 0& 1 \end{array}\right]$. By \cite[Example 3.4]{AKLY13}, there is
a $2$-recollement $$\xymatrix@!=8pc{ \mathcal{D}B \ar@<+1.5ex>[r]|{-\otimes ^L_Be_1A}
\ar@<-4.5ex>[r] & \mathcal{D}A \ar@<+1.5ex>[r]|{-\otimes_A^LAe_2} \ar@<-4.5ex>[r]
\ar@<-4.5ex>[l]|{-\otimes_A^L A/Ae_2A} \ar@<+1.5ex>[l]|{-\otimes ^L_AAe_1} &
\mathcal{D}C\ar@<-4.5ex>[l]|{-\otimes ^L_Ce_2A}
\ar@<+1.5ex>[l]|{-\otimes _C^L A/Ae_1A}}.$$
Clearly, $\pd ((Ae_1)_B)=\pd(M_B)$, $\pd (_A(A/Ae_2A))=\pd(_CM)+1$,
$\pd ((A/Ae_1A)_A)
\linebreak =\pd(M_B)+1$, and the condition
$DA\otimes_A^LAe_2\in K^{b}(\inj C)$ is equivalent to
$\pd (_CM)<\infty$. Now the corollary follows from Theorem~\ref{theorem1}.
\end{proof}
Recall that an algebra $A$ is called {\it Gorenstein} if $\id (_AA)<\infty$
and $\id (A_A)<\infty$. In this case, $\id (_AA)<n$ if and only if
$\id (A_A)<n$ (\cite[Lemma A]{Zak69}). An Gorenstein algebra $A$ is {\it $n$-Gorenstein} if
$\id (_AA)<n$.

For a finite dimensional algebra $A$ and $n\in \mathbb{N}^+$, denote
$$T_n(A) = \left(\begin{array}{ccccc} A & 0 & \cdots &0 &0
\\  A & A & \cdots &0 &0 \\  \vdots & \vdots & \ddots &\vdots &\vdots
\\ A & A & \cdots &A &0 \\ A & A & \cdots &A &A\end{array}\right).$$

\begin{corollary}{\rm (\cite[Lemma 4.1]{XZ12})} $T_n(A)$ is $(m + 1)$-Gorenstein
if and only if $A$ is $m$-Gorenstein.
\end{corollary}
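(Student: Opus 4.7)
The plan is to proceed by induction on $n \geq 2$. The key observation is that in the decomposition $T_n(A)=\left[\begin{array}{cc} A & 0 \\ N & T_{n-1}(A)\end{array}\right]$, the off-diagonal bimodule $N\cong A^{n-1}$ is free (hence projective) as a right $A$-module and is the first indecomposable projective of $T_{n-1}(A)$ as a left $T_{n-1}(A)$-module, so $\pd(N_A)=\pd({}_{T_{n-1}(A)}N)=0$; the ``symmetric'' decomposition $T_n(A)=\left[\begin{array}{cc} T_{n-1}(A) & 0 \\ N' & A\end{array}\right]$ has the same property. These zero values are exactly what make the $+1$ gap in Corollary~\ref{corollary-triangular-matrix-injdim}(ii) and (iii) as tight as possible.

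For the base case $n=2$, with $B=C=A$ and $M=A$, parts (ii) and (iii) of Corollary~\ref{corollary-triangular-matrix-injdim} immediately give $\id(T_2(A)_{T_2(A)})\leq \id(A_A)+1$ and $\id({}_{T_2(A)}T_2(A))\leq \id({}_AA)+1$, yielding the implication that $A$ being $m$-Gorenstein forces $T_2(A)$ to be $(m+1)$-Gorenstein. For the converse, part (i) alone only gives $\id(A)\leq \id(T_2(A))$, which is off by one, so the plan is to sharpen it via the fully faithful exact embedding $G\colon \mod A\to \mod T_2(A)$, $M\mapsto (0,M,0)$, arising from restriction of scalars along the surjection $T_2(A)\twoheadrightarrow A$ with kernel $T_2(A)e_1T_2(A)$. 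Building the minimal projective resolution of $G(M)$ in $\mod T_2(A)$ from a resolution $\cdots\to P_1\to P_0\to M\to 0$ in $\mod A$ — its degree-$0$ term is $(P_0,P_0,\mathrm{id})$ and for $i\geq 1$ its degree-$i$ term is $(P_{i-1},0,0)\oplus(P_i,P_i,\mathrm{id})$, the $i$-th syzygy having the form $(P_{i-1},\Omega^iM,\iota)$ — and using the identifications $\Hom_{T_2(A)}((P,P,\mathrm{id}),T_2(A))\cong \Hom_A(P,A)$ and $\Hom_{T_2(A)}((P,0,0),T_2(A))\cong \Hom_A(P,A)^{\oplus 2}$, a direct cohomology computation produces the degree-shift isomorphism $\Ext^{i+1}_{T_2(A)}(G(M),T_2(A))\cong \Ext^i_A(M,A)$ for every $i\geq 0$. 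This yields $\id(T_2(A)_{T_2(A)})\geq \id(A_A)+1$, and the dual computation gives the same on the left.

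For the inductive step $n\geq 3$, assume the statement for $n-1$. If $A$ is $m$-Gorenstein, the induction hypothesis gives $\id(T_{n-1}(A))\leq m$ on both sides; applying (ii) to the first decomposition and (iii) to the symmetric one yields $\id(T_n(A))\leq m$ on both sides, so $T_n(A)$ is $(m+1)$-Gorenstein. Conversely, if $T_n(A)$ is $(m+1)$-Gorenstein then (i) applied to both decompositions forces $\id(T_{n-1}(A))\leq \id(T_n(A))\leq m$ on both sides, so $T_{n-1}(A)$ is $(m+1)$-Gorenstein, and the induction hypothesis concludes that $A$ is $m$-Gorenstein. The main obstacle is entirely in the base case reverse direction: Corollary~\ref{corollary-triangular-matrix-injdim} alone is off by one, and the sharp lower bound $\id(T_2(A))\geq \id(A)+1$ must come from the explicit $\Ext$-shift furnished by the embedding $M\mapsto (0,M,0)$; once this is in hand, the inductive step is a routine assembly of (i), (ii), and (iii).
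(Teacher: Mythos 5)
Your proof is correct, and at the decisive step it takes a genuinely different route from the paper's. The forward direction and the induction are essentially the same in both arguments: the paper also writes $T_n(A)$ as a triangular matrix over $A$ and $T_{n-1}(A)$ with the off-diagonal bimodule projective on both sides and feeds this into Corollary~\ref{corollary-triangular-matrix-injdim}(ii); it does not use your symmetric decomposition for the left-module statements, relying instead on Zaks' lemma via Gorensteinness, but either device works. The real divergence is in the converse. The paper gets $\id(A_A)\leq m$ from $\id(T_2(A)_{T_2(A)})\leq m+1$ by asserting, without proof, that a right $A$-module $X$ with $\id(X_A)=m+1$ yields a right $T_2(A)$-module $(X,0)$ of injective dimension $m+2$, and that this contradicts $\id(T_2(A)_{T_2(A)})\leq m+1$ --- a step that tacitly uses Gorensteinness of $T_2(A)$ to bound finite injective dimensions of arbitrary modules. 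You instead prove the sharp lower bound $\id(T_2(A)_{T_2(A)})\geq \id(A_A)+1$ outright via the degree shift $\Ext^{i+1}_{T_2(A)}((0,M,0),T_2(A))\cong \Ext^i_A(M,A)$. That isomorphism is correct: your resolution with degree-$i$ term $(P_{i-1},0,0)\oplus(P_i,P_i,\mathrm{id})$ and syzygies $(P_{i-1},\Omega^iM,\iota)$ checks out, and the same shift also follows from the short exact sequence $0\to M\otimes_Ae_1\Lambda\to M\otimes_Ae_2\Lambda\to (0,M,0)\to 0$ together with $\Ext^i_{\Lambda}(M\otimes_Ae_j\Lambda,\Lambda)\cong\Ext^i_A(M,\Lambda e_j)$, the induced map $\Ext^i_A(M,\Lambda e_2)\to\Ext^i_A(M,\Lambda e_1)$ being a split monomorphism with cokernel $\Ext^i_A(M,A)$. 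Your approach buys the exact equality $\id(T_2(A))=\id(A)+1$ on both sides and needs no Gorenstein hypothesis for the lower bound, at the cost of an explicit resolution computation; the paper's version is shorter but leaves its key dimension-shift claim unjustified. One bookkeeping remark: your translation ``$(m+1)$-Gorenstein $\Leftrightarrow$ $\id\leq m$'' follows the paper's stated convention $\id(_AA)<n$, so the numerics in your inductive step are consistent with the definition as written.
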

\begin{proof}
Set $M=\left(\begin{array}{c} A \\ A \\ \vdots \\ A\end{array}\right)_{(n-1)\times 1}$.
Regarding $M$ as a natural $T_{n-1}(A)$-$A$-bimodule via the
multiplication of matrices, then $T_n(A) = \left[\begin{array}{cc} A & 0
\\ M & T_{n-1}(A) \end{array}\right] $.
Obviously, both $_{T_n(A)}M$ and $M_A$ are projective modules. Therefore,
$T_n(A)$ is Gorenstein
if and only if $A$ is Gorenstein (Ref. \cite[Theorem 3.3]{Chen09} or \cite[Theorem 2.2]{XZ12}).

Assume $A$ is $m$-Gorenstein. By Corollary~\ref{corollary-triangular-matrix-injdim},
$\id(T_n(A)_{T_n(A)})\leq \sup \{m+1, \id(T_{n-1}(A)_{T_{n-1}(A)})\}$,
and thus, we get $\id(T_n(A))\leq m+1$ by induction.

Conversely, if $\id(T_n(A))_{T_n(A)})\leq m+1$, then by Corollary~\ref{corollary-triangular-matrix-injdim},
$\id (A_A)\leq m+1$ and
$\id (T_2(A)_{T_2(A)})\leq m+1$ .
Assume there exist an module $X\in \mod A$ with $\id (X_A)=m+1$.
Then the injective dimension of $(X_A,0)$ as a right $T_2(A)$ module is
$m+2$, which conflicts to $\id (T_2(A)_{T_2(A)})\leq m+1$.
Therefore, $\id (A_A)\leq m$.

\end{proof}

Next, we mention that our technique in Theorem~\ref{theorem1} can also be applied
to global dimension and finitistic dimension.  Here we just write down these theorems
without proof.

\begin{theorem}{\rm (See \cite[Theorem 3.18]{CX13})}\label{thoerem-global-dim}
Let $A$, $B$ and $C$ be finite dimensional $k$-algebras, and $\mathcal{D} A$ admit a standard
recollement relative to $\mathcal{D} B$ and $\mathcal{D} C$ defined by
$X \in D(\Mod C^{op} \otimes A)$ and $Y \in D(\Mod A^{op} \otimes B)$. Set $Y^*:=\RHom_B(Y,B)$.
Then
  $$\max \{{\rm gl.dim}B-w(Y_B),{\rm gl.dim}C-w(_CX)\} \leq {\rm gl.dim}A $$
 $$ \leq   \max \{{\rm gl.dim}B + \pd(_AY)+\pd (Y_A^*), {\rm gl.dim}C + w(X_A) \}. $$
\end{theorem}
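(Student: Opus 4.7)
The plan is to run the argument of Theorem~\ref{theorem1} with the right $A$-module $A$ replaced by an arbitrary $N\in\mod A$, and then to take the supremum over $N$. The key extra ingredient is a uniform version of \cite[Lemma 1.6]{Kato98}: if $R$ is a finite dimensional algebra with $\gl(R)<\infty$ and $U^\bullet, V^\bullet\in\mathcal{D}^b(\mod R)$ have cohomology concentrated in degree ranges $[a,b]$ and $[c,d]$ respectively, then $\Hom_{\mathcal{D} R}(U^\bullet,V^\bullet[i])=0$ for every $i>\gl(R)+d-a$. This follows from \cite[Lemma 1.6]{Kato98} after resolving $U^\bullet$ by a projective complex in degrees $[a-\gl(R),b]$, which exists precisely because $\gl(R)<\infty$. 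The cases where $\gl(R)=\infty$ or a relevant width or projective dimension is infinite make the desired inequalities vacuous, so I may assume finiteness throughout.

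For the lower bound, the full faithfulness of $j_!$ and $i_*$ gives $\Ext^i_C(M,N)=\Hom_{\mathcal{D} A}(j_! M,j_! N[i])$ for $M,N\in\mod C$, and $\Ext^i_B(M,N)=\Hom_{\mathcal{D} A}(i_* M,i_* N[i])$ for $M,N\in\mod B$. If $w({}_CX)<\infty$, then both $j_! M=M\otimes^L_C X$ and $j_! N$ lie in $\mathcal{D}(A)_{[-\pd({}_CX),-\sup({}_CX)]}$, so the uniform bound with $R=A$ gives $\gl(C)\leq\gl(A)+w({}_CX)$. If $w(Y_B)<\infty$, then $Y_B\in K^b(\proj B)$ and Lemma~\ref{lemma2} places ${}_BY^*$ in a projective complex in degrees $[\sup(Y_B),\pd(Y_B)]$; hence $i_* M=M\otimes^L_B Y^*$ and $i_* N$ both sit in $\mathcal{D}(A)_{[\sup(Y_B),\pd(Y_B)]}$, and the uniform bound delivers $\gl(B)\leq\gl(A)+w(Y_B)$.

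For the upper bound, I fix $M,N\in\mod A$ and apply $\Hom_{\mathcal{D} A}(-,N[i])$ to the triangle $j_! j^* M\to M\to i_* i^* M\to$; it then suffices to bound the two outer Hom groups. Adjunction identifies them as $\Hom_{\mathcal{D} B}(i^* M,i^! N[i])$ and $\Hom_{\mathcal{D} C}(j^* M,j^* N[i])$ respectively. Arguing as in the proof of Theorem~\ref{theorem1}(c) via Proposition~\ref{lemma-functor} and Lemma~\ref{lemma2}, I place $i^* M$ in $\mathcal{D}(B)_{[-\pd({}_A Y),-\sup({}_A Y)]}$ and $i^! N=\RHom_A(Y^*,N)$ in $\mathcal{D}(B)_{[\sup(Y^*_A),\pd(Y^*_A)]}$, and both $j^* M=M\otimes^L_A X^*$ and $j^* N$ in $\mathcal{D}(C)_{[\sup(X_A),\pd(X_A)]}$. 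The uniform bound with $R=B$ gives vanishing for $i>\gl(B)+\pd({}_AY)+\pd(Y^*_A)$, and with $R=C$ for $i>\gl(C)+w(X_A)$; combining these yields the asserted upper bound.

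The main obstacle is technical rather than conceptual: one must carefully verify the cohomological strips of $i^* M$, $i^! N$ and $j^* N$ for an arbitrary $N\in\mod A$ (instead of $N=A$ as in Theorem~\ref{theorem1}), and confirm the uniform version of \cite[Lemma 1.6]{Kato98} described above. Once these are in place, the remaining argument is a direct translation of the selfinjective-dimension proof.
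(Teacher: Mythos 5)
Your proposal is correct and follows exactly the route the paper intends: the paper omits the proof of this theorem, remarking only that the technique of Theorem~\ref{theorem1} carries over, and your adaptation (replacing the target $A$ by an arbitrary $N\in\mod A$, invoking the uniform version of \cite[Lemma 1.6]{Kato98} available when $\gl(R)<\infty$, and noting that this removes the hypotheses $i_*B\in K^b(\proj A)$ and $i^!A\in K^b(\proj B)$) is precisely that adaptation.
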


\begin{theorem}{\rm (See \cite[Theorem 3.11]{CX13})}
Let $A$, $B$ and $C$ be finite dimensional $k$-algebras, and ($\mathcal{D} B$,\ $\mathcal{D} A$,\ $\mathcal{D} C$,\ $i^*,i_*=i_!,i^!,j_!,j^!=j^*,j_*$)
a standard recollement defined by
$X \in D(\Mod C^{op} \otimes A)$ and $Y \in D(\Mod A^{op} \otimes B)$. Set $Y^*:=\RHom_B(Y,B)$.
Then

{\rm (a)} $\fd C\leq\fd A+w(_CX)$;

{\rm (b)} If $i_*B \in K^{b}(\proj A)$ then $\fd B\leq\fd A+w(Y_B)$;

{\rm (c)} $\fd (A_A)\leq \max \{ \fd (B_B) +\pd (_AY)+\pd (Y^*_A), \fd (C_C)+w(X_A) \}.$
\end{theorem}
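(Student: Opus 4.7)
The plan is to mimic the proof of Theorem~\ref{theorem1}, with $\fd$ playing the role of $\id$ and the regular module $A_A$ replaced by an arbitrary $N\in\mod A$. The unifying vanishing principle is the same: for a compact source $F^\bullet$ and a cohomologically bounded target $G^\bullet$ in $\mathcal{D}(\Mod A)$, $\Hom_{\mathcal{D}(\Mod A)}(F^\bullet,G^\bullet[i])$ vanishes for $i$ exceeding a fixed global dimension plus the combined degree-window. In Theorem~\ref{theorem1} that dimension was $\id(A_A)$ and the tool was \cite[Lemma 1.6]{Kato98}; here it will be $\fd(A)$, which bounds $\Ext_A^i(M,N)$ uniformly in $N\in\mod A$ whenever $\pd_A M<\infty$.

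\medskip

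For (a), fix $M\in\mod C$ with $\pd_C M<\infty$ and $N\in\mod C$. By full faithfulness of $j_!$, $\Ext^i_C(M,N)\cong\Hom_{\mathcal{D}(\Mod A)}(j_!M,j_!N[i])$. Tensoring with the minimal projective resolution of $_CX$ as in Theorem~\ref{theorem1}(a) shows that both $j_!M$ and $j_!N$ have cohomology concentrated in $[-\pd(_CX),-\sup(_CX)]$, and $j_!M$ is moreover compact in $\mathcal{D}A$ (since $j_!$ preserves compacts and $M\in K^b(\proj C)$). A Kato-style vanishing with the fixed dimension $\fd(A)$ then forces this Hom to be zero for $i>\fd(A)+w(_CX)$. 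Part (b) is analogous after replacing $j_!$ by $i_*$ and $X$ by $Y^*$: the hypothesis $i_*B\in K^b(\proj A)$ together with Lemma~\ref{lemma2} secures both the required degree-window for $i_*M$ and the compactness of $Y^*_A$, yielding the bound $\fd(A)+w(Y_B)$. For (c), I follow Theorem~\ref{theorem1}(c) verbatim: fix $M\in\mod A$ with $\pd_A M<\infty$ and $N\in\mod A$, apply $\Hom_{\mathcal{D}(\Mod A)}(-,N[i])$ to the triangle $j_!j^*M\to M\to i_*i^*M\to$, and rewrite the outer terms via the adjunctions $(j_!,j^*)$ and $(i_*,i^!)$ as $\Hom_{\mathcal{D}C}(j^*M,j^*N[i])$ and $\Hom_{\mathcal{D}B}(i^*M,i^!N[i])$. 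The window computations of Steps 1 and 2 of Theorem~\ref{theorem1}(c), relying on Proposition~\ref{lemma-functor} and Lemma~\ref{lemma2} (and using that $j^*M,i^*M$ are compact), then deliver the bounds $\fd(C)+w(X_A)$ and $\fd(B)+\pd(_AY)+\pd(Y^*_A)$ respectively.

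\medskip

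The main obstacle is establishing the $\fd$-analog of \cite[Lemma 1.6]{Kato98}. In the $\id$ proof the target is the regular module $A_A$, whose injective resolution delivers the dimension bound directly. For $\fd$ the target is arbitrary, so the vanishing must instead be squeezed out of the fact that $F^\bullet$ is compact and arises from a module of finite projective dimension, which is exactly what $\fd(A)$ controls. A clean route is to filter $F^\bullet$ by triangles whose successive quotients are shifts of modules of projective dimension bounded by $\fd(A)$, and to sum the resulting Ext-vanishings over the degree-window; this is the strategy underlying \cite[Theorem 3.11]{CX13}, and it adapts to the standard-recollement setup without essential change.
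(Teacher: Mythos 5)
The paper states this theorem without proof, deferring to \cite[Theorem 3.11]{CX13} and to the remark that the technique of Theorem~\ref{theorem1} applies; your reconstruction follows exactly that route and is sound. The one ingredient that must be made explicit is the $\fd$-analogue of Kato's lemma, since the roles are reversed relative to Theorem~\ref{theorem1}: the target ($j_!N$, $i_*N$, $i^!N$, $j^*N$) is now only cohomologically bounded above, so the vanishing has to be extracted from the source, and your filtration idea is the right one --- a perfect complex whose cohomology vanishes below degree $u$ is quasi-isomorphic (as in Lemma~\ref{lemma-projective-truncation}(a)) to $K\to P^{u+1}\to\cdots\to P^{v}$ with $\pd_A(K)<\infty$, hence $\pd_A(K)\le \fd(A)$, which yields $\Hom_{\mathcal{D}(\Mod A)}(F^\bullet,G^\bullet[i])=0$ for $i>\fd(A)+(\mbox{top of }G^\bullet)-u$. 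This is precisely why the compactness of $j_!M$, $i_*M$, $i^*M$ and $j^*M$ must be verified as you do (for $j^*M$ one first reduces to $\pd(Y^*_A)<\infty$ so that $j^*A\in K^b(\proj C)$ by \cite[Lemma 4.3]{AKLY13}, exactly as in Step 2 of Theorem~\ref{theorem1}(c)), and it also explains why part (c) here needs no hypothesis analogous to $i^!A\in K^b(\proj B)$.
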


\subsection{Recollement and $\phi$-dimension}
\indent\indent In this section, we will observe the behavior of the
$\phi$-dimension under recollements of derived categories of algebras.

We start now by recalling the definition of the Igusa-Todorov function $\phi$ :
Obj$(\mod A) \rightarrow \mathbb{N}$.
For a finite dimensional algebra $A$, let $K(A)$ denote the quotient of the free abelian group generated by
the set of iso-classes $\{[M] : M \in \mod A\}$ modulo the relations: (a) $[N] - [S] - [T]$ if
$N \cong S \oplus T $ and (b) $[P]$ if $P$ is projective.
Then $K$ is the free abelian group generated by all isomorphism classes of finitely
generated indecomposable non projective $A$-modules.
The syzygy functor $\Omega$ gives rise to a group homomorphism $\Omega$ : $K\rightarrow K$. For any $M \in \mod A$, let
$\langle M \rangle$ denote the subgroup of $K$ generated by all the indecomposable non projective
summands of $M$. Since the rank of
$\Omega (\langle M \rangle)$ is less or equal to the rank of $\langle M \rangle$ which
is finite, it follows from the well ordering principle that there exists a non-negative
integer $n$ such that the rank of
$\Omega ^n (\langle M \rangle)$ is equal to the rank of
$\Omega ^i (\langle M \rangle)$ for all
$i \geq n$. We let $\phi (M)$ denote the least such $n$.

For any $M \in \mod A$, $\phi (M)$ is a good refinement of its projective
dimension $\pd (M)$. Indeed, $\phi (M)=\pd (M)$ if if $\pd (M) < \infty$;
and $\phi (M)$ is always finite even if $\pd (M) = \infty$ (Ref. \cite{HLM08,IT05}).

Now we introduce the natural concept of $\phi$-dimension of a finite dimensional algebra $A$.

\begin{definition}
The $\phi$-dimension of an algebra $A$ is defined as follows $$\phi \dim(A) = \sup \{ \ \phi(M)\ | \ M \in \mod A \}.$$
\end{definition}

In \cite{FLM15}, the authors give another description of the $\phi$-dimension
in terms of the bi-functor $\Ext_A^i
(-,-)$. Let's explain their work in detail.

\begin{definition}
Let $d$ be a positive integer, $M \in \mod A$, and
$ X, Y \in\add (M)$ with $\add (X) \cap \add (Y)=0$. The pair
$(X, Y)$ is called a $d$-Division of $M$ if
$\Ext ^d_A(X,-) \ncong \Ext ^d_A(Y,-)$
and $\Ext ^{d+1}_A(X,-) \cong \Ext ^{d+1}_A(Y,-)$.
\end{definition}

\begin{theorem}{\rm (\cite[Theorem 3.6]{FLM15})}\label{theorem-phi}
Let $A$ be a finite dimensional algebra and $M \in \mod A$. Then
$\phi (M)= \max (\{d \in \mathbb{N} : \mbox{there is a d-Division of M}\} \cup {0})$.
\end{theorem}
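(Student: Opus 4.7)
The plan is to translate the existence of a $d$-Division into a rank condition on the descending chain of subgroups $\Omega^{i}(\langle M\rangle)$ of $K$, and then to compare this rank condition with the definition of $\phi(M)$.

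The first and main step is a key lemma: for $N_{1},N_{2}\in \mod A$ and $d\geq 1$, one has $\Ext^{d}_{A}(N_{1},-)\cong\Ext^{d}_{A}(N_{2},-)$ as functors on $\mod A$ if and only if $[\Omega^{d-1}N_{1}]=[\Omega^{d-1}N_{2}]$ in $K$. The reverse implication is immediate, since $\Ext^{i}$ annihilates projective summands for $i\geq 1$. The forward direction reduces via the dimension-shift isomorphism $\Ext^{d}(N,-)\cong \Ext^{1}(\Omega^{d-1}N,-)$ (valid for $d\geq 1$) to the case $d=1$, where one must recover the class of $N$ in $K$ from the functor $\Ext^{1}_{A}(N,-)$. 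I would do this via Auslander--Reiten theory, evaluating the functor on $\tau M_{i}$ for each indecomposable non-projective summand $M_{i}$ of $M$ and invoking the AR formula $D\Ext^{1}_{A}(N,\tau M_{i})\cong\overline{\Hom}_{A}(M_{i},N)$ together with the fact that $\overline{\Hom}_{A}(M_{i},-)$ detects the multiplicity of $M_{i}$. This is the main obstacle, as it is where the finite dimensionality of $A$ really enters.

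Next, I would observe that any pair $X,Y\in\add(M)$ with $\add(X)\cap\add(Y)=0$ yields $z:=[X]-[Y]\in\langle M\rangle$, and conversely every element $z\in\langle M\rangle$ admits such a presentation by splitting $z$ into its positive and negative coordinates in the basis of indecomposable non-projective summands of $M$. Combined with the key lemma, a $d$-Division of $M$ exists (for $d\geq 1$) if and only if there is $z\in\langle M\rangle$ with $\Omega^{d-1}(z)\neq 0$ but $\Omega^{d}(z)=0$, i.e., $\ker(\Omega^{d}|_{\langle M\rangle})\supsetneq\ker(\Omega^{d-1}|_{\langle M\rangle})$. Since $\langle M\rangle$ is a finitely generated free abelian group, the rank--nullity theorem converts this into the purely numerical condition ${\rm rank}\,\Omega^{d}(\langle M\rangle)<{\rm rank}\,\Omega^{d-1}(\langle M\rangle)$.

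To conclude, recall that $\phi(M)$ is by definition the least $n\geq 0$ at which the non-increasing sequence $({\rm rank}\,\Omega^{i}(\langle M\rangle))_{i\geq 0}$ stabilises. If $\phi(M)=0$ the sequence is already constant, so no strict drop occurs and no $d$-Division exists for any $d\geq 1$; the maximum in the claimed formula is then taken over $\{0\}$ and equals $0$. If $\phi(M)=n\geq 1$, then by minimality of $n$ the rank strictly drops from step $n-1$ to step $n$, while by the stabilisation condition no drop occurs at any later step; hence the largest $d$ admitting a $d$-Division is exactly $n=\phi(M)$, giving the claimed equality.
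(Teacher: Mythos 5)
The paper offers no proof of this statement: it is quoted verbatim from \cite[Theorem 3.6]{FLM15}, so there is no internal argument to compare yours against; I therefore assess your plan on its own terms. It is correct and is essentially the standard route (close to the original one in \cite{FLM15}): reduce everything to the non-increasing sequence ${\rm rank}\,\Omega^{i}(\langle M\rangle)$ via the equivalence $\Ext^{d}_{A}(N_{1},-)\cong\Ext^{d}_{A}(N_{2},-)\Leftrightarrow[\Omega^{d-1}N_{1}]=[\Omega^{d-1}N_{2}]$ in $K$, and then read off $\phi(M)$ as the position of the last rank drop. Two points in your sketch need tightening. First, in the $d=1$ case of your key lemma the Auslander--Reiten formula identifies $D\Ext^{1}_{A}(N,\tau M_{i})$ with $\underline{\Hom}_{A}(M_{i},N)$ (morphisms modulo those factoring through projectives, not modulo injectives), and the dimension of this space is \emph{not} the multiplicity of $M_{i}$ in $N$, so pure dimension counting does not finish the argument; instead use that the given isomorphism $\Ext^{1}_{A}(N_{1},-)\cong\Ext^{1}_{A}(N_{2},-)$ is natural, so AR duality produces a natural isomorphism $\underline{\Hom}_{A}(-,N_{1})\cong\underline{\Hom}_{A}(-,N_{2})$ on the projectively stable category (using that $\tau^{-1}$ is dense there), whence $N_{1}\cong N_{2}$ stably by Yoneda and $[N_{1}]=[N_{2}]$ in $K$ by Krull--Schmidt. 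Second, your rank--nullity step tacitly uses that $\ker(\Omega^{d}|_{\langle M\rangle})=\langle M\rangle\cap\ker(\Omega^{d})$ is a \emph{pure} subgroup of $\langle M\rangle$ (it is the preimage of $0$ under a map into the torsion-free group $K$); purity is what guarantees that a strict inclusion of these kernels forces a strict increase of rank, since strict inclusions of subgroups of a free abelian group of equal rank (such as $2\mathbb{Z}\subsetneq\mathbb{Z}$) are otherwise possible. With these two points made explicit, the argument is complete.
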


With this homological description, S. Fernandes, M. Lanzilotta and O. Mendoza
showed that the difference between the $\phi$-dimension of two derived equivalence
algebras is less than the length of the tilting complex. Now we will extend this
in the framework of recollement. The following lemma will be used latter.

\begin{lemma}{\rm (\cite[Corollary 3.3]{FLM15})}\label{corollary-ext}
Let $A$ be a finite dimensional algebra and $M,N \in \mod A$. Then, the following
conditions are equivalent.

{\rm (a)} $\Ext ^{n+1}_A(X,-) \cong \Ext ^{n+1}_A(Y,-)$;

{\rm (b)} $\Ext_C^t(M,-)\cong \Ext_C^t(N,-)$ for any $t\geq n+1$.
\end{lemma}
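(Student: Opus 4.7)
The plan is straightforward since (b)\,$\Rightarrow$\,(a) is immediate (take $t=n+1$). The content is therefore (a)\,$\Rightarrow$\,(b), which I would handle by induction on $t \geq n+1$; the base case $t = n+1$ is precisely the hypothesis (reading the $X,Y$ in (a) and the $C$ in (b) as typos for $M,N$ and $A$).

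For the inductive step, assume $\Ext_A^t(M,-) \cong \Ext_A^t(N,-)$ as functors on $\mod A$, and fix an arbitrary $L \in \mod A$. Embed $L$ into an injective module via a short exact sequence $0 \to L \to I \to L' \to 0$. Since $\Ext_A^s(-, I) = 0$ for every $s \geq 1$, the long exact sequences attached to $\Ext_A^*(M,-)$ and $\Ext_A^*(N,-)$ yield dimension-shifting isomorphisms
\[
\Ext_A^{t+1}(M, L) \;\cong\; \Ext_A^t(M, L') \;\cong\; \Ext_A^t(N, L') \;\cong\; \Ext_A^{t+1}(N, L),
\]
where the middle isomorphism is the inductive hypothesis applied at $L'$, and the outer two are connecting isomorphisms coming from vanishing of $\Ext_A^s(-,I)$ for $s = t, t+1$ (valid because $t \geq n+1 \geq 1$).

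The main technical obstacle is promoting this pointwise construction to a genuine natural isomorphism of functors in $L$. I would deal with this by fixing once and for all a functorial injective coresolution on $\mod A$; equivalently, by passing to $\mathcal{D}^+(\mod A)$, where $\Ext_A^i(M,L) = \Hom_{\mathcal{D}(\Mod A)}(M, L[i])$ and the dimension shift becomes the genuine triangulated shift, which is automatically natural. The naturality of the connecting homomorphisms with respect to morphisms of short exact sequences, combined with the natural isomorphism supplied by the inductive hypothesis, then makes the outer composite natural in $L$. With naturality secured at each step, the induction furnishes natural isomorphisms $\Ext_A^t(M,-) \cong \Ext_A^t(N,-)$ of functors for every $t \geq n+1$, completing the proof of (a)\,$\Rightarrow$\,(b).
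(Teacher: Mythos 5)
Your argument is correct, but note that the paper itself offers no proof of this lemma: it is quoted verbatim (typos included --- you are right that $X,Y$ should read $M,N$ and $C$ should read $A$) from \cite[Corollary 3.3]{FLM15}, so there is no in-paper proof to match against. The direction (b)$\Rightarrow$(a) is indeed trivial, and your induction for (a)$\Rightarrow$(b) by dimension-shifting in the \emph{second} variable is sound: from a short exact sequence $0\to L\to I\to L'\to 0$ with $I$ injective one gets $\Ext^{t+1}_A(M,L)\cong\Ext^t_A(M,L')$ for $t\geq 1$, and the chain of isomorphisms you display closes the induction. Your concern about naturality is the one real technical point, and your fix works: the coinduced embedding $L\hookrightarrow\Hom_k({}_AA,L)$ gives a functorial injective copresentation, so $L'$ and the connecting maps are natural in $L$; equivalently one can argue in $\mathcal{D}(\Mod A)$ with the triangle $L\to I\to L'\to L[1]$. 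For comparison, the proof in \cite{FLM15} shifts in the \emph{first} variable: condition (a) says $\Ext^1_A(\Omega^nM,-)\cong\Ext^1_A(\Omega^nN,-)$, which by the Hilton--Rees/Auslander embedding theorem forces $\Omega^nM$ and $\Omega^nN$ to agree in the projectively stable category, whence $\Omega^{n+k}M$ and $\Omega^{n+k}N$ agree stably for all $k\geq 0$ and (b) follows. Your route avoids that nontrivial input at the cost of having to secure naturality by hand, and is the more elementary and self-contained of the two.
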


\begin{lemma}\label{lemma-projective-truncation}
Let $A$ be a finite dimensional algebra and $X^\bullet \in \mathcal{C} (A) _{[a,b]}$.
Then $X^\bullet$ is quasi-isomorphism to complexes $U^\bullet _X$ and $V^\bullet _X$, where

{\rm (a)} $U^\bullet _X\in \mathcal{C} (A) _{[a,b]}$ with $U^i_X\in \proj A,\ \forall \ i\in [a+1,b];$

{\rm (b)} $V^\bullet _X\in \mathcal{C} (A) _{[a,b]}$ with $V^i_X\in \inj A,\ \forall \ i\in [a,b-1].$
\end{lemma}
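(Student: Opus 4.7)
Both parts are instances of the same truncation procedure, applied on opposite sides. For (a), the plan is to take a projective resolution $\pi\colon P^\bullet \to X^\bullet$ in $\mathcal{C}^-(\proj A)$, arranged so that $P^i=0$ for $i>b$ (which is possible because $X^\bullet$ vanishes in those degrees; one can build $P^\bullet$ by a Cartan--Eilenberg construction or by induction from the right end). Then I would form the good truncation
\[
U_X^\bullet := \tau^{\geq a}P^\bullet,
\quad\text{with}\quad U_X^i=\begin{cases} 0 & i<a,\\ P^a/\mathrm{im}(d_P^{a-1}) & i=a,\\ P^i & i>a.\end{cases}
\]
The canonical map $P^\bullet \to U_X^\bullet$ is a quasi-isomorphism because $H^i(P^\bullet)\cong H^i(X^\bullet)=0$ for all $i<a$, so composing with $\pi$ gives $X^\bullet \cong U_X^\bullet$ in $\mathcal{D}A$. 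By construction $U_X^\bullet\in\mathcal{C}(A)_{[a,b]}$ with $U_X^i=P^i\in\proj A$ for every $i\in[a+1,b]$.

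Part (b) is the exact dual. I would take an injective coresolution $\iota\colon X^\bullet\to I^\bullet$ with $I^\bullet\in\mathcal{C}^+(\inj A)$ and $I^i=0$ for $i<a$, and then apply the good truncation $\tau^{\leq b}$, putting $V_X^i=I^i$ for $i<b$, $V_X^b=\ker(d_I^b)$, and $V_X^i=0$ for $i>b$. Since $H^i(I^\bullet)\cong H^i(X^\bullet)=0$ for $i>b$, the inclusion $V_X^\bullet\hookrightarrow I^\bullet$ is a quasi-isomorphism, giving $X^\bullet\cong V_X^\bullet$ in $\mathcal{D}A$ with $V_X^i\in\inj A$ for every $i\in[a,b-1]$.

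There is not really a substantial obstacle here: the lemma is just a packaging of two standard facts, namely that $\Mod A$ admits a bounded-above projective resolution and a bounded-below injective coresolution for any bounded complex, and that good truncation preserves quasi-isomorphism exactly when the cohomology already vanishes in the truncated range. The only point worth emphasising is that the statement does not demand projectivity of $U_X^a$ or injectivity of $V_X^b$, consistent with the fact that the truncated end-terms $P^a/\mathrm{im}(d_P^{a-1})$ and $\ker(d_I^b)$ typically do not inherit these properties.
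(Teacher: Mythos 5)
Your proof is correct and is essentially the same as the paper's: both take a projective resolution of $X^\bullet$ vanishing in degrees above $b$ and then pass to the good truncation at $a$, i.e.\ replace $P^a$ by $\Coker(d^{a-1})$, with the dual injective-coresolution construction for part (b).
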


\begin{proof}
(a) Since $X^\bullet \in \mathcal{C} (A) _{[a,b]}$, $X$ is quasi-isomorphic to
a projective complex $(P^\bullet, d)$ of the form
$$\cdots \longrightarrow P^{a} \longrightarrow P^{a+1}\longrightarrow
\cdots \longrightarrow P^{b} \longrightarrow 0 .$$
Clearly, the projection $\pi :\ P^{a}\longrightarrow \Coker d^{a-1}$
induce a quasi-isomorphic between the complex $P^\bullet $ and
$$0 \longrightarrow \Coker d^{a-1} \longrightarrow P^{a+1}\longrightarrow
\cdots \longrightarrow P^{b} \longrightarrow 0 ,$$
and thus we obtain the desired complex.

(b) This can be proven in a similar way as we did in (a).
\end{proof}

\begin{theorem}\label{theorem4}
Let $A$, $B$ and $C$ be finite dimensional algebras, and ($\mathcal{D} B$,\ $\mathcal{D} A$,\ $\mathcal{D} C$,\ $i^*,i_*=i_!,i^!,j_!,j^!=j^*,j_*$)
a standard recollement defined
 by $X \in D^b(C^{op} \otimes A)$ and $Y \in D^b(A^{op} \otimes B)$. Suppose $Y^*=\RHom_B(Y,B)$.
 Then the following hold true:

{\rm (a)} $\phi  \dim(C) \leq \phi  \dim(A)+w(_CX)$;

{\rm (b)} If $i_*B \in K^{b}(\proj A)$ then $\phi  \dim(B) \leq \phi  \dim(A)+w(Y_B)$;

{\rm (c)} $\phi  \dim(A)\leq \max \{ \phi  \dim(B) +\pd (_AY)+\pd (Y^*_A), \phi  \dim(C)+w(X_A) \}.$

\end{theorem}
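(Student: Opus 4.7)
The plan is to adapt the template of the proof of Theorem~\ref{theorem1}, replacing the $\Ext$-vanishing characterization of selfinjective dimension with the $d$-Division characterization of $\phi$-dimension (Theorem~\ref{theorem-phi}). The enabling ingredient is Lemma~\ref{corollary-ext}, which upgrades a single isomorphism $\Ext^{d+1}(X,-)\cong\Ext^{d+1}(Y,-)$ into isomorphisms in all higher degrees, so that a Division condition transfers cleanly through fully faithful functors in the derived category.

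For part (a), let $d=\phi(M)$ for some $M\in\mod C$ and fix a $d$-Division $(X_1,Y_1)\in\add(M)$. The first step transports the $\mathcal{D}C$-side agreement $\Ext^t_C(X_1,-)\cong\Ext^t_C(Y_1,-)$ (for $t\geq d+1$) to the $\mathcal{D}A$ side: since $j_!$ is fully faithful and $j^!i_*=0$, applying $\Hom_{\mathcal{D}A}(j_!X_1,-)$ to the recollement triangle $j_!j^*Z\to Z\to i_*i^*Z\to$ kills the $i_*$-term, and the adjunction $(j_!,j^!)$ yields $\Hom_{\mathcal{D}A}(j_!X_1,Z[t])\cong\Hom_{\mathcal{D}C}(X_1,j^*Z[t])$, which together with a cohomological d\'evissage on $j^*Z$ matches $\Hom_{\mathcal{D}A}(j_!Y_1,Z[t])$ for every $Z\in\mathcal{D}A$ and all sufficiently large $t$. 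The second step invokes Lemma~\ref{lemma-projective-truncation}(a) to represent $j_!X_1\cong U^\bullet_{X_1}\in\mathcal{C}(A)_{[a,b]}$ with projective terms in degrees $[a+1,b]$, where $a=-\pd(_CX)$, $b=-\sup(_CX)$, $w(_CX)=b-a$. Setting $F(X_1):=U^a_{X_1}\in\mod A$ (similarly $F(Y_1)$), the triangle $F(X_1)[-a]\to j_!X_1\to\sigma^{>a}U^\bullet_{X_1}\to$ has a perfect third term whose $\Hom$ into $L[s-a]$ for $L\in\mod A$ vanishes for $s\geq 0$, giving $\Ext^s_A(F(X_1),L)\cong\Hom_{\mathcal{D}A}(j_!X_1,L[s-a])$. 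Combining both steps produces agreement $\Ext^s_A(F(X_1),-)\cong\Ext^s_A(F(Y_1),-)$ for $s$ large; a matched d\'evissage of the $C$-side disagreement at $t=d$ through the cohomology layers of $j_!N$ (for a witness $N\in\mod C$, cohomology supported in $[a,b]$) produces a disagreement at some $s^{*}\in[d-w(_CX),d]$. Taking $s^{**}$ to be the maximal disagreement index (so $s^{**}\geq s^{*}\geq d-w(_CX)$), after trimming any common indecomposable summands of $F(X_1),F(Y_1)$ to secure $\add(F(X_1))\cap\add(F(Y_1))=0$, one obtains an $s^{**}$-Division in $\mod A$, hence $\phi\dim(A)\geq s^{**}\geq d-w(_CX)$, proving (a).

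Parts (b) and (c) follow the same blueprint. In (b), the role of $j_!$ is taken by the fully faithful functor $i_*=\RHom_B(Y,-)\cong -\otimes^L_B Y^{*}$; the hypothesis $i_*B\in K^{b}(\proj A)$ ensures the relevant truncations of $Y^{*}$ are perfect, and the width shift becomes $w(Y_B)$. In (c), starting from a $d$-Division in $\mod A$, the triangle $j_!j^{*}N\to N\to i_*i^{*}N\to$ splits the comparison into an $i_*$-piece (controlled by $\phi\dim(B)+\pd(_AY)+\pd(Y^{*}_A)$, with $_AY$ entering via $i^{*}=-\otimes^L_A Y$ and $Y^{*}_A$ via $i^{!}=\RHom_A(Y^{*},-)$) and a $j_!$-piece (controlled by $\phi\dim(C)+w(X_A)$); their maximum is the claimed bound. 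The principal technical obstacle is the careful cohomological bookkeeping that replaces $\pd$ by $w$ in the width shift — this exploits the full cohomology range of the bimodule-complex truncations rather than only their projective-dimension span — together with the pruning step securing $\add\cap\add=0$ for the extracted module pair.
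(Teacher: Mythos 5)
Your proposal is correct and follows essentially the same route as the paper's own proof: convert $\phi$-dimension data into $d$-Divisions via Theorem~\ref{theorem-phi} and Lemma~\ref{corollary-ext}, transport them through the recollement functors by adjunction and full faithfulness, use Lemma~\ref{lemma-projective-truncation} to extract the bottom term of a ``projective-except-in-lowest-degree'' representative as the new module pair, and track agreement/disagreement degrees by d\'evissage with Kato-type vanishing (with the same splitting of (c) along the recollement triangle into the $i_*$- and $j_!$-pieces). The only differences are cosmetic (e.g.\ the paper pins the forced disagreement at exactly degree $d-w(_CX)$ by a contrapositive argument, whereas you locate it in the interval $[d-w(_CX),d]$ before passing to the maximal disagreement index, which suffices equally well).
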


\begin{proof}
(a) If $w(_CX)=\infty$, (a) holds obviously. Assume that $w(_CX)=n <\infty$.
If $\phi  \dim(C) \leq n$ then $\phi  \dim(C) \leq \phi  \dim(A)+n$. If not,
there exists $L\in \mod C$
such that $\phi _C(L)=d>n$.
It follows from Theorem~\ref{theorem-phi} and Lemma~\ref{corollary-ext} that there exist $M, N\in \add L$
such that
$$\left\{\begin{array}{ll}\Ext_C^d(M,-)\ncong \Ext_C^d(N,-),& (1)
\vspace{2mm}  \\ \Ext_C^t(M,-)\cong \Ext_C^t(N,-) \  \mbox{for}\   t\geq d+1.\ \ &(2)\end{array}\right.$$
Let's prove (a) by the following three steps.

\medskip

{\it Step 1.} We claim $$\Hom_{\mathcal{D}C}(M,j^*(-)[t]) \cong\Hom_{\mathcal{D}C}(M,j^*(-)[t]) | _{\mod A},
\forall \ t\geq d+ \pd (X_A)+1,$$ and thus by adjointness, we have
$$\Hom_{\mathcal{D}C}(j_!M,-[t]) \cong\Hom_{\mathcal{D}C}(j_!N,-[t]) | _{\mod A},
\forall \ t\geq d+ \pd (X_A)+1.$$

\medskip

Indeed, for any $S\in \mod A$, $j^*S\cong S\otimes_A^LX^* \in \mathcal{D}(C)_{[\sup (X_A),
\pd (X_A)]}$ by the proof of Theorem~\ref{theorem1}.
For the sake of simplicity, we define $a:=\sup (X_A)$, $b:=\pd (X_A)$,
and sometimes we denote the bi-functor $\Hom _{\mathcal{D}C}(-,-)$ by
$(-,-)$.
It follows from Lemma~\ref{lemma-projective-truncation} that $j^*S $ is quasi-isomorphic to
a complex $V^\bullet \in \mathcal{D} (C) _{[a,
b]}$ with $V^i\in \inj A$, for all $i\in [a,
b-1].$
Applying the functor $\Hom_{\mathcal{D}C}(M,-[t])$ to the triangle
$$V^{b}[-b] \longrightarrow V^\bullet \longrightarrow \tau _{\leq b-1}V^\bullet \longrightarrow $$
(where $\tau$ is the brutal truncation, that is, $\tau _{\leq b-1}V^\bullet$
is the complex
\linebreak
$0 \longrightarrow V^{a} \longrightarrow
\cdots \longrightarrow V^{b-1} \longrightarrow 0 )$,
we get an exact sequence
$$(M,\tau _{\leq b-1}V^\bullet[-1][t])\longrightarrow
(M,V^{b}[-b][t])\longrightarrow
(M,V^\bullet[t])\longrightarrow(M,\tau _{\leq b-1}V^\bullet[t]).$$
By \cite[Lemma 1.6]{Kato98}, we have $(M,\tau _{\leq b-1}V^\bullet[t])=0$,
for any $t> b-1$. Therefore, $(M,V^{b}[-b][t])\cong
(M,V^\bullet[t])$, for any $t> b$. And similarly, we obtain that \linebreak $(N,V^{b}[-b][t])\cong
(N,V^\bullet[t])$, for any $t> b$. On the other hand,
(2) implies that $(M,V^{b}[-b][t])\cong (N,V^{b}[-b][t])$, for any $t-b\geq d+1$,
and thus, $(M,V^\bullet[t])\cong (N,V^\bullet[t])$, for any $t\geq d+b+1$.
Therefore, $(M,j^*S [t])\cong (N,j^*S [t])$, for any $t\geq d+b+1$ and $S\in \mod C$.

\medskip

{\it Step 2.} Since $w(_CX)<\infty$, we get $\pd(_CX)<\infty$. Then,
$j_!M\cong M\otimes_C^LX
\linebreak \in \mathcal{D}(A)_{[-\pd (_CX),
-\sup (_CX)]}$ and $j_!N\in \mathcal{D}(A)_{[-\pd (_CX),
-\sup (_CX)]}$. Let $\pd(_CX)=c$. Since $\sup (_CX)=\sup (X_A)=a$, we obtain that
$j_!M$, $j_!N\in \mathcal{D}(A)_{[-c,-a]}$.
By Lemma~\ref{lemma-projective-truncation}, the complexes $j_!M $ and $j_!N $
can be replaced, respectively, by $Z^\bullet , W^\bullet \in \mathcal{D} (A) _{[-c,
-a]}$ with $Z^i, W^i\in \proj A,\ \forall \ i\in [-c+1,
-a].$ Now we claim that $\Ext_A^t(Z^{-c},-)\cong\Ext_A^t(W^{-c},-)$, for any $t\geq d-c+b+1$.

\medskip

Indeed, for any $S\in \mod A$, applying the functor $\Hom_{\mathcal{D}A}( -,S[t])$ to the triangle
$$\tau_{\geq -c+1}Z^\bullet \longrightarrow Z^\bullet \longrightarrow Z^{-c}[c]\longrightarrow ,$$
we get an exact sequence
$$(\tau_{\geq -c+1}Z^\bullet,S[t-1])\longrightarrow
(Z^{-c}[c],S[t])\longrightarrow
(Z^\bullet,S[t])\longrightarrow(\tau_{\geq -c+1}Z^\bullet,S[t]).$$
By \cite[Lemma 1.6]{Kato98}, we have $(\tau_{\geq -c+1}Z^\bullet,S[t])=0$,
for any $t> c-1$. Therefore, $(Z^{-c}[c],S[t])\cong
(Z^\bullet,S[t])$, for any $t> c$. And similarly, we
obtain that\linebreak $(W^{-c}[c],S[t])\cong
(W^\bullet,S[t])$, for any $t> c$. On the other hand,
step one implies that $(Z^\bullet,S[t])\cong (W^\bullet,S[t])$, for any $t\geq d+b+1$,
and thus, $(Z^{-c}[c],S[t])\cong (W^{-c}[c],S[t])$, for any $t\geq \max \{ d+b+1, c+1\}$.
Therefore, $(Z^{-c},S[t])\cong (W^{-c},S[t])$, for any $t\geq \max \{ d+b-c+1, 1 \}$.
Since $d>n=-a+c$, $d+b-c+1>-a+b+1>1$, and then $(Z^{-c},S[t])\cong (W^{-c},S[t])$, for any $t\geq d+b-c+1$.

\medskip

{\it Step 3.} We claim $\Ext_A^{d+a-c}(Z^{-c},-)\ncong\Ext_A^{d+a-c}(W^{-c},-)$.

\medskip

Assume $\Ext_A^{d+a-c}(Z^{-c},-)\cong\Ext_A^{d+a-c}(W^{-c},-)$. Then
$(Z^{-c},j_!T[d-c])\cong (W^{-c},j_!T[d-c])$,
for any
$T \in \mod C$.
Indeed, since $j_!T\cong T\otimes ^L_CX\in \mathcal{D}(A)_{[-c,-a]}$,
it follows from Lemma~\ref{lemma-projective-truncation} that
$j_!T$ is quasi-isomorphism to a complex $J^\bullet \in \mathcal{D}(A)_{[-c,-a]}$ with $J^i\in \inj A,\ \forall \ i\in [-c,
-a-1].$
Applying the triangle functor $\Hom_{\mathcal{D}A}(Z^{-c},-[d-c])$ to the triangle
$$J^{-a}[a] \longrightarrow J^\bullet \longrightarrow \tau _{\leq -a-1}J^\bullet \longrightarrow ,$$
we get an exact sequence
$$(Z^{-c},\tau _{\leq -a-1}J^\bullet[d-c-1])\longrightarrow
(Z^{-c},J^{b}[d-c+a])$$ $$\longrightarrow
(Z^{-c},J^\bullet[d-c])\longrightarrow(Z^{-c},\tau _{\leq -a-1}J^\bullet[d-c]).$$
Since $d-c>-a+c-c=-a$, it follows from \cite[Lemma 1.6]{Kato98} that
$(Z^{-c},\tau _{\leq -a-1}J^\bullet[d-c])=0=(Z^{-c},\tau _{\leq -a-1}J^\bullet[d-c-1])$. Therefore,
\linebreak
$(Z^{-c},J^{b}[d-c+a]) \cong
(Z^{-c},J^\bullet[d-c])$. And similarly, we
obtain that
\linebreak
 $(W^{-c},J^{b}[d-c+a]) \cong
(W^{-c},J^\bullet[d-c])$. Therefore, $(Z^{-c},J^\bullet[d-c])\cong (W^{-c},J^\bullet[d-c])$
and thus $(Z^{-c},j_!T[d-c])\cong (W^{-c},j_!T[d-c])$, for any
$T \in \mod C$.

Now we claim $(Z^\bullet,j_!T[d])\cong (W^\bullet,j_!T[d])$, for any $T\in \mod A$. Indeed,
applying the functor $\Hom_{\mathcal{D}A}( -,j_!T[d])$ to the triangle
$$\tau_{\geq -c+1}Z^\bullet \longrightarrow Z^\bullet \longrightarrow Z^{-c}[c]\longrightarrow ,$$
we get an exact sequence
$$(\tau_{\geq -c+1}Z^\bullet,j_!T[d-1])\longrightarrow
(Z^{-c}[c],j_!T[d])\longrightarrow
(Z^\bullet,j_!T[d])\longrightarrow(\tau_{\geq -c+1}Z^\bullet,j_!T[d]).$$
Since $d>-a+c$, it follows from \cite[Lemma 1.6]{Kato98} that $(\tau_{\geq -c+1}Z^\bullet,j_!T[d])=0
=(\tau_{\geq -c+1}Z^\bullet,j_!T[d-1])$. Therefore, $(Z^{-c}[c],j_!T[d])\cong
(Z^\bullet,j_!T[d])$. And similarly, we
obtain that $(W^{-c}[c],j_!T[d])\cong
(W^\bullet,j_!T[d])$. Owning to the previous paragraph, we get
$(Z^\bullet,j_!T[d])\cong (W^\bullet,j_!T[d])$, for any $T\in \mod C$. That is,
$\Hom_{\mathcal{D}A}(j_!M,j_!(-)[d]) \cong\Hom_{\mathcal{D}A}(j_!N,j_!(-)[d])| _{\mod C}.$

On the other hand, the equation (1) implies that
$\Hom_{\mathcal{D}A}(j_!M,j_!(-)[d]) \ncong\Hom_{\mathcal{D}A}(j_!N,j_!(-)[d])| _{\mod C},$
which is a contradiction.

\medskip

Above all, we obtain $$\left\{\begin{array}{ll}\Ext_A^{d+a-c}(Z^{-c},-)\ncong\Ext_A^{d+a-c}(W^{-c},-),& (3)
\vspace{2mm}  \\ \Ext_A^t(Z^{-c},-)\cong\Ext_A^t(W^{-c},-)\  \mbox{for}\   t\geq d-c+b+1\ \ &(4)\end{array}\right.$$
In particular, we get from equation (3) that $Z^{-c}\ncong W^{-c}$, and then,
dropping the common direct summands if necessary, we may assume $\add (Z^{-c})\cap \add (W^{-c})=0$.
Therefore, $(Z^{-c}, W^{-c})$ is a $m$-Division
of $Z^{-c}\oplus W^{-c}$, where $m \geq d+a-c$.
As a consequence, for any $L\in \mod C$
with $\phi _C(L)=d>n$, there exists $Z^{-c}\oplus W^{-c}\in \mod A$
with $\phi _A (Z^{-c}\oplus W^{-c})\geq m \geq d-n$.
Thus, if $\phi \dim(C) =\infty$ then so is $\phi  \dim(A)$ and the inequality
(a) holds true. If $\phi \dim(C)< \infty$, we may take $L\in \mod C$
with $\phi _C(L)=d=\phi \dim(C)>n$, then
$\phi  \dim(A)\geq \phi (Z^{-c}\oplus W^{-c})\geq d-n$,
that is, $\phi  \dim(A)\geq \phi  \dim(C)-w(_CX)$.

\medskip

(b) This can be proven in a similar way as we did in (a), just considering
the functors $i_*$ and $i^!$ instead of $j_!$ and $j^*$. Note that
the condition $i_*B \in K^{b}(\proj A)$ ensure that $i^!$ can be restricted
to $D^b(\mod)$.

\medskip

(c) If one of $\pd (_AY), \pd (Y^*_A)$ and $w(X_A)$ is infinite, the statement holds obviously.
Now assume all of them are finite.
If $\phi  \dim(A)\leq \pd (_AY)+\pd (Y^*_A)$
or $\phi  \dim(A)\leq w(X_A)$, (c) also holds true.
Otherwise, there exists $L\in \mod A$ with $\phi (L)=d$, where $d>\pd (_AY)+\pd (Y^*_A)$ and $d>w(X_A)$.
It follows from Theorem~\ref{theorem-phi} that there exist $M, N\in \add L$
such that
$$\left\{\begin{array}{ll}\Ext_A^d(M,-)\ncong \Ext_A^d(N,-),& (5)
\vspace{2mm}  \\ \Ext_A^t(M,-)\cong \Ext_A^t(N,-)\  \mbox{for} \ t\geq d+1.\ \ &(6)\end{array}\right.$$
Let's prove (c) by the following four steps.

\medskip

{\it Step 1.}
Assume $\sup(_AY)=t$ and $\pd (_AY)=e$. Then $i^*M\cong M\otimes _A^LY \in \mathcal{D}(B)_{[-e,-t]}$
and $i^*N \in \mathcal{D}(B)_{[-e,-t]}$.
It follows from Lemma~\ref{lemma-projective-truncation} that $i^*M$ and $i^*N$
can be replaced, respectively, by $M^\bullet , N^\bullet \in \mathcal{D} (B) _{[-e,
-t]}$ with $M^i, N^i\in \proj B$ for $i\in [-e+1,
-t].$
Now we claim $$\Ext_B^t(M^{-e},-)\cong\Ext_B^t(N^{-e},-), \forall
\ t\geq d-e+\pd (Y_B)+1.$$

Indeed, this can be proved analogy to that of part (a): Firstly,
we claim $\Hom_{\mathcal{D}A}(M,i_*(-)[t]) \cong\Hom_{\mathcal{D}A}(M,i_*(-)[t]) | _{\mod B}$
for any $t\geq d+\pd (Y_B)+1.$ Then by adjointness,
$\Hom_{\mathcal{D}B}(i^*M,-[t]) \cong\Hom_{\mathcal{D}B}(i^*N,-[t]) | _{\mod B}$
for any $t\geq d+ \pd (Y_B)+1.$ Finally, applying suitable functor
to the corresponding triangle, we get $\Ext_B^t(M^{-e},-)\cong\Ext_B^t(N^{-e},-)$
for any $t\geq d-e+\pd (Y_B)+1$.

\medskip

{\it Step 2.} Assume $\sup(X_A)=a$ and $\pd(X_A)=b$. Then $\sup(_AX^*)=-b$, $\pd(_AX^*)=-a$
and thus, $j^*M\cong M\otimes _A^LX^*\in \mathcal{D}(C)_{[a,b]}$, $j^*N \in \mathcal{D}(C)_{[a,b]}$.
It follows from Lemma~\ref{lemma-projective-truncation} that $j^*M$ and $j^*N$
can be replaced, respectively, by $U^\bullet , V^\bullet \in \mathcal{D} (C) _{[a,
b]}$ with $U^i, V^i\in \proj A$ for $i\in [a+1,
b].$
Using the same method as Step 1, we get
$$\Ext_C^t(U^{a},-)\cong\Ext_C^t(V^{a},-), \forall
\ t\geq d+a+\pd (X_C^*)+1.$$

\medskip

{\it Step 3.} Assume $\pd (Y_A^*)=f$. We claim $\Ext_B^{d-e-f}(M^{-e},-)\ncong\Ext_B^{d-e-f}(N^{-e},-)$
or $\Ext_C^{d+a-b}(U^{a},-)\ncong\Ext_C^{d+a-b}(V^{a},-)$.

\medskip

Otherwise, $\Ext_B^{d-e-f}(M^{-e},-)\cong\Ext_B^{d-e-f}(N^{-e},-)$ and $\Ext_C^{d+a-b}(U^{a},-)\cong\Ext_C^{d+a-b}(V^{a},-)$.
For any $S\in \mod B$,
applying the functor $\Hom_{\mathcal{D}B}(-,S[d-f])$ to the triangle
$$\tau _{\geq -e+1}M^\bullet \longrightarrow M^\bullet \longrightarrow M^{-e}[e] \longrightarrow ,$$
we get an exact sequence
$$(\tau _{\geq -e+1}M^\bullet [1], S[d-f])\longrightarrow
(M^{-e}[e],S[d-f]) $$ $$\longrightarrow
(M^\bullet,S[d-f])\longrightarrow(\tau _{\geq -e+1}M^\bullet,S[d-f]).$$
Since $d-f>e+f-f=e$, by \cite[Lemma 1.6]{Kato98}, we get
$(\tau _{\geq -e+1}M^\bullet,S[d-f])=0=(\tau _{\geq -e+1}M^\bullet [1], S[d-f])$. Therefore,
$(M^{-e}[e],S[d-f]) \cong
(M^\bullet,S[d-f])$. And similarly, we
obtain that
 $(N^{-e}[e],S[d-f]) \cong
(N^\bullet,S[d-f])$. Therefore, $(M^\bullet,S[d-f])\cong (N^\bullet,S[d-f])$
and thus $(i^*M,S[d-f])\cong (i^*N,S[d-f])$, for any
$S \in \mod B$.

Assume $\sup (Y_A^*)=s$. For any $T \in \mod A$,
$H^n(i^!T)= \Hom _{\mathcal{D} B}(B, i^!T[n])=\Hom _{\mathcal{D} A}(i_*B, T[n])=0$,
for any $n>f$ or $n<s$. Therefore,
$i^!T \in \mathcal{D}(B)_{[s,f]}$. It follows from Lemma~\ref{lemma-projective-truncation} that the complexes $i^!T$
can be replaced by $T^\bullet \in \mathcal{D} (B) _{[s,
f]}$ with $T^i \in \inj B$ for $i\in [s,
f-1].$
Applying $\Hom_{\mathcal{D}B}( i^*M, -[d])$ to the triangle
$$T^{f}[-f] \longrightarrow T^\bullet \longrightarrow \tau _{\leq f-1}T^\bullet \longrightarrow ,$$
we get an exact sequence
$$(i^*M,\tau _{\leq f-1}T^\bullet[d-1])\longrightarrow
(i^*M,T^{f}[d-f])\longrightarrow
(i^*M,T^\bullet[d])\longrightarrow(i^*M,\tau _{\leq f-1}T^\bullet[d]).$$
Since $d>f+e$,
it follows from \cite[Lemma 1.6]{Kato98} that $(i^*M,\tau _{\leq f-1}T^\bullet[d])=0
=(i^*M,\tau _{\leq f-1}T^\bullet[d-1])$. Therefore, $(i^*M,T^{f}[d-f])\cong
(i^*M,T^\bullet[d])$. And similarly, we
obtain that $(i^*N,T^{f}[d-f])\cong
(i^*N,T^\bullet[d])$. By the statement of the previous paragraph, we get
$(i^*M,T^\bullet[d])\cong (i^*N,T^\bullet[d])$. That is,
$(i^*M,i^!T[d])\cong (i^*N,i^!T[d])$, for any $T\in \mod A$.

\medskip

Similarly, $\Ext_C^{d+a-b}(U^{a},-)\cong\Ext_C^{d+a-b}(V^{a},-)$ implies that
$(j^*M,j^*T[d])\cong (j^*N,j^*T[d])$, for any $T\in \mod A$.

\medskip

Applying the functor $\Hom_{\mathcal{D}A}( -, T[d])$ to the triangles
$$j_!j^*M \longrightarrow M \longrightarrow i_*i^*M \longrightarrow \ \ \mbox{and}
\ \ j_!j^*N \longrightarrow N \longrightarrow i_*i^*N \longrightarrow $$
and using adjoint property,
we get two exact sequences
$$(i^*M,i^!T[d])\longrightarrow
(M,T[d])\longrightarrow
(j^*M,j^*T[d])$$
$$(i^*N,i^!T[d])\longrightarrow
(N,T[d])\longrightarrow
(j^*N,j^*T[d]).$$
Since $(i^*M,i^!T[d])=(i^*N,i^!T[d])$ and $(j^*M,j^*T[d])=(j^*N,j^*T[d])$, we get $(M,T[d])\cong (N,T[d])$, for any $T\in \mod A$.
However, this contradicts to the equation (5).

\medskip

Above all, we obtain $$\left\{\begin{array}{ll}\Ext_B^{d-e-f}(M^{-e},-)\ncong\Ext_B^{d-e-f}(N^{-e},-),&\ \ (7)
\vspace{2mm}  \\ \Ext_B^t(M^{-e},-)\cong\Ext_B^t(N^{-e},-)\ \mbox{for} \ t\geq d-e+\pd (Y_B)+1. &\ \ (8)\end{array}\right.$$
or
$$\left\{\begin{array}{ll}\Ext_C^{d+a-b}(U^{a},-)\ncong\Ext_C^{d+a-b}(V^{a},-),& \ \ \ \ \ \ (9)
\vspace{2mm}  \\ \Ext_C^t(U^{a},-)\cong\Ext_C^t(V^{a},-)\ \mbox{for} \ t\geq d+a+\pd (X_C^*)+1. &\ \ \ \ \ \ (10)\end{array}\right.$$

 Therefore, $\phi _B (M^{-e}\oplus N^{-e})\geq d-e-f$
 or $\phi _C (U^{a}\oplus V^{a})\geq d+a-b$. Hence,
$\phi  \dim(B) \geq \phi  \dim(A)-e-f$ or
$\phi  \dim(C)\geq \phi  \dim(A)+a-b$. Therefore,
$\phi  \dim(A)\leq \max \{ \phi  \dim(B) +\pd (_AY)+\pd (Y^*_A), \phi  \dim(C)+w(X_A) \}.$
\end{proof}

Applying Theorem~\ref{theorem4} to trivial recollement, we reobtain
the main result in \cite{FLM15}.
\begin{corollary}{\rm (\cite[Theorem 4.10]{FLM15})}
Let $A$ and $B$ be two derived equivalence algebras and $_BP_A^\bullet$
the corresponding two-side tilting complex. Then
$$\phi  \dim(A)-w(P_A) \leq \phi  \dim(B)\leq \phi  \dim(A)+w(P_A) .$$
\end{corollary}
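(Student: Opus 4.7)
The plan is to deduce the corollary as a direct specialisation of Theorem~\ref{theorem4} to the trivial standard recollement attached to a derived equivalence. Given a two-sided tilting complex $_BP_A^\bullet$ realising the equivalence, one has the trivial recollement
$$\xymatrix@!=4pc{ 0 \ar[r] & \mathcal{D}A \ar@<-3ex>[l] \ar@<+3ex>[l] \ar[r]^{-\otimes_A^L P_B^*} & \mathcal{D}B \ar@<-3ex>[l]_{-\otimes_B^L P_A} \ar@<+3ex>[l]}$$
of $\mathcal{D}A$ relative to $0$ and $\mathcal{D}B$. In the notation of Theorem~\ref{theorem4}, the role of the algebra $C$ is played by $B$ and the bimodule $X \in D^b(C^{\op} \otimes A)$ corresponds to $_BP_A^\bullet$, while the role of the algebra $B$ is played by a trivial ring and $Y$ is the trivial bimodule.

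For the upper bound $\phi\dim(B) \leq \phi\dim(A) + w(P_A)$, I would apply part (a) of Theorem~\ref{theorem4} directly to obtain $\phi\dim(B) \leq \phi\dim(A) + w(_BP)$, and then identify $w(_BP) = w(P_A)$. This identification is a standard fact for two-sided tilting complexes---the very same one already invoked in the analogous Kato-style corollary for selfinjective dimension earlier in the paper---and reflects the fact that one may choose a representative of $_BP_A^\bullet$ with projective bimodule terms, from which the minimal one-sided projective resolutions on both sides are read off with the same shape.

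For the lower bound $\phi\dim(A) - w(P_A) \leq \phi\dim(B)$, I would apply part (c) of Theorem~\ref{theorem4}. Because the bimodule $Y$ lives over a trivial ring, the three quantities $\pd(_AY)$, $\pd(Y^*_A)$, and the $\phi$-dimension of the trivial ring all vanish, so the first argument of the maximum in part (c) collapses to $0$. What remains is $\phi\dim(A) \leq \phi\dim(B) + w(P_A)$, which is precisely the desired inequality after transposition. The whole argument is a mechanical specialisation; the only ingredient beyond invoking Theorem~\ref{theorem4} is the width identity $w(_BP) = w(P_A)$, and the substantive mathematical content has already been done in that theorem.
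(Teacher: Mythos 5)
Your proposal is correct and is exactly the paper's argument: the paper likewise obtains the corollary by specialising Theorem~\ref{theorem4} to the trivial recollement $(0,\ \mathcal{D}A,\ \mathcal{D}B)$ induced by the two-sided tilting complex, using part (a) for the upper bound and part (c) for the lower bound together with the identity $w(_BP)=w(P_A)$, just as in the earlier Kato-style corollary for selfinjective dimension.
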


\noindent {\footnotesize {\bf ACKNOWLEDGMENT.}
This work is supported by Yunnan Applied Basic Research Project 2016FD007.}

\end{document}